\newtheorem{theorem}{Theorem}
\newtheorem{definition}[theorem]{Definition}
\newtheorem{lemma}[theorem]{Lemma}
\newtheorem{notation}[theorem]{Notation}
\newtheorem{remark}[theorem]{Remark}
\newenvironment{proof}[1][Proof]{\noindent\textbf{#1.} }{\ \rule{0.5em}{0.5em}}
\begin{document}

\title{Dimension-free Euler estimates of rough differential equations}
\author{Youness Boutaib, Lajos Gergely Gyurk\'{o}, Terry Lyons, Danyu Yang
\and University of Oxford and Oxford-Man Institute}
\maketitle

\begin{abstract}
We extend the result in \cite{Davie A. M}, \cite{Friz and Victoir} and \cite%
{Friz and Victoir book}, and give a dimension-free Euler estimation of
solution of rough differential equations in term of the driving rough path.
In the meanwhile, we prove that, the solution of rough differential equation
is close to the exponential of a Lie series, with a concrete error bound.
\end{abstract}

\section{Introduction}

Suppose that $X$ is a continuous bounded variation path defined on some
interval $I$ and taking its values in a Banach space $\mathcal{V}$. We view
this path as a stream of information and allow that it is highly oscillatory
on normal scales. The theory of rough paths considers streams of
information, such as $X$, for their effect on other systems and provides
quantitative tools to model this interaction. Consider the stream as the
input to an automata or controlled differential equation and so impacting on
the evolution on the state $Y$ of some controlled system:%
\begin{equation}
dY=f\left( Y\right) dX\text{, }Y_{0}=\xi .  \label{differential equation}
\end{equation}%
A key development of the theory is the development of quantitative tools and
estimates that allow one to estimate the response $Y$ from a top down
analysis of $X$ and in particular provides a mechanism for directly
quantifying the effects of the oscillatory components of $X$ without a
detailed analysis of the trajectory of $X$. As a result, the methods apply
to equations where $X$ does not have finite length. Differential equations
driven by Brownian motion can treated deterministically.

The interest in modelling and understanding such interactions is rather
wide. This manuscript is intended to create a useful interface by stating
and proving one of the main results in a way that appears to the authors
particularly useful for moving out into applications. It deliberately sets
out to hide the machinery and implementation of the main proofs in rough
path theory and to provide only a useful and rigorous statement of a result
that captures the essence of what the machinery delivers and is valid across
all Banach spaces (including finite dimensional ones) so that the methods
can be used more widely without great initial intellectual investment.

Davie \cite{Davie A. M} established some high order Euler estimates of
solution of rough differential equations, driven by $p$-rough paths, $1\leq
p<3$. By using geodesic approximations, Friz and Victoir \cite{Friz and
Victoir}, \cite{Friz and Victoir book} extend Davie's results to rough
differential equations driven by weak geometric $p$-rough paths, $p\geq 3$.
The formulation and proof in \cite{Davie A. M}, \cite{Friz and Victoir} and 
\cite{Friz and Victoir book} are dimension-dependent, and the error bound
may explode as the dimension increases.

By modifying the method used in \cite{Davie A. M}, \cite{Friz and Victoir}
and \cite{Friz and Victoir book}, we give a dimension-free high order Euler
estimation of solution of rough differential equations (i.e. both the
driving rough path and the solution path live in infinite dimensional
spaces). Our estimates are first developed for ordinary differential
equations. Then by passing to limit and using universal limit theorem (see 
\cite{Lyons1998}, \cite{LyonsQian}), similar estimates holds for rough
differential equations.

The main idea of our proof is to compare the solution of $\left( \ref%
{differential equation}\right) $ (on small interval $\left[ s,t\right] $)
with the solution of another ordinary differential equation (on $\left[ 0,1%
\right] $) whose vector field varies with $s,t$. Based on Arous \cite{Ben
Arous}, Hu \cite{Hu} and Castell \cite{Castell F}, the solution of
stochastic differential equation can be approximated on fixed small interval
by the exponential of a Lie series, and the exponential can equivalently be
treated as the solution of an ordinary differential equation. As we
demonstrate, their idea is also applicable to rough differential equations
in Banach spaces.

\section{Background and Notations}

We denote $\mathcal{U}$ and $\mathcal{V}$ as two Banach spaces.

\subsection{Algebraic Structure}

\begin{definition}
\label{Definition of tensor space and Lie bracket space}We select a norm on
tensor product of (elements in) $\mathcal{U}$ and $\mathcal{V}$, which
satisfies the inequality: (up to an universal constant) 
\begin{equation}
\left\Vert u\otimes v\right\Vert _{\mathcal{U}\otimes \mathcal{V}}\leq
\left\Vert u\right\Vert _{\mathcal{U}}\left\Vert v\right\Vert _{\mathcal{V}}%
\text{, \ }\forall u\in \mathcal{U}\text{, }\forall v\in \mathcal{V}\text{.}
\label{condition on tensor norm}
\end{equation}%
Define $\mathcal{U}\otimes \mathcal{V}$ and $\left[ \mathcal{U},\mathcal{V}%
\right] $ as the closure of 
\begin{gather*}
\left\{ \sum_{k=1}^{m}u_{k}\otimes v_{k}\text{, }\left\{ u_{k}\right\}
_{k=1}^{m}\subset \mathcal{U}\text{, }\left\{ v_{k}\right\}
_{k=1}^{m}\subset \mathcal{V}\text{, }m\geq 1\right\} \text{,} \\
\left\{ \sum_{k=1}^{m}\left( u_{k}\otimes v_{k}-v_{k}\otimes u_{k}\right) 
\text{, }\left\{ u_{k}\right\} _{k=1}^{m}\subset \mathcal{U}\text{, }\left\{
v_{k}\right\} _{k=1}^{m}\subset \mathcal{V}\text{, }m\geq 1\right\} \text{,}
\end{gather*}%
w.r.t. the norm selected on the tensor product $\otimes $.
\end{definition}

As an example, inequality $\left( \ref{condition on tensor norm}\right) $ is
satisfied by injective and projective tensor norms (Prop 2.1 and Prop 3.1 in 
\cite{Ryan}).

\begin{definition}
For integers $n\geq k\geq 1$, denote $\pi _{k}$ as the projection of $%
1\oplus \mathcal{V}\oplus \cdots \oplus \mathcal{V}^{\otimes n}$ to $%
\mathcal{V}^{\otimes k}$. Define $\exp _{n}:\mathcal{V}\oplus \cdots \oplus 
\mathcal{V}^{\otimes n}\rightarrow 1\oplus \mathcal{V}\oplus \cdots \oplus 
\mathcal{V}^{\otimes n}$ as%
\begin{equation*}
\exp _{n}\left( a\right) :=1+\sum_{k=1}^{n}\pi _{k}\left( \sum_{j=1}^{n}%
\frac{a^{\otimes j}}{j!}\right) \text{, }\forall a\in \mathcal{V}\oplus
\cdots \oplus \mathcal{V}^{\otimes n}\text{.}
\end{equation*}%
Define $\log _{n}:1\oplus \mathcal{V}\oplus \cdots \oplus \mathcal{V}%
^{\otimes n}\rightarrow \mathcal{V}\oplus \cdots \oplus \mathcal{V}^{\otimes
n}$ as%
\begin{equation*}
\log _{n}\left( g\right) :=\sum_{k=1}^{n}\pi _{k}\left( \sum_{j=1}^{n}\frac{%
\left( -1\right) ^{j+1}}{j}\left( g-1\right) ^{\otimes j}\right) \text{, }%
\forall g\in 1\oplus \mathcal{V}\oplus \cdots \oplus \mathcal{V}^{\otimes n}%
\text{.}
\end{equation*}
\end{definition}

\begin{definition}[$G^{n}\left( \mathcal{V}\right) $]
\label{Definition of nilpotent Lie group}Suppose $\mathcal{V}$ is a Banach
space. Then we define recursively%
\begin{equation}
\left[ \mathcal{V}\right] ^{k+1}:=\left[ \mathcal{V},\left[ \mathcal{V}%
\right] ^{k}\right] \text{ \ with \ }\left[ \mathcal{V}\right] ^{1}:=%
\mathcal{V},  \label{Notation of kth order Lie bracket of V}
\end{equation}%
and define%
\begin{equation*}
G^{n}\left( \mathcal{V}\right) :=\left\{ \exp _{n}\left( a\right) |a\in %
\left[ \mathcal{V}\right] ^{1}\oplus \cdots \oplus \left[ \mathcal{V}\right]
^{n}\right\} \text{.}
\end{equation*}%
For $g,h\in G^{n}\left( \mathcal{V}\right) $, we define product and inverse
as%
\begin{equation*}
g\otimes h:=\sum_{k=0}^{n}\left( \sum_{j=0}^{k}\pi _{j}\left( g\right)
\otimes \pi _{k-j}\left( h\right) \right) \text{ \ and \ }%
g^{-1}:=1+\sum_{k=1}^{n}\pi _{k}\left( \sum_{j=1}^{n}\left( -1\right)
^{j}\left( g-1\right) ^{\otimes j}\right) \text{.}
\end{equation*}%
If we equip $G^{n}\left( \mathcal{V}\right) $ with the homogeneous norm%
\begin{equation*}
\left\Vert g\right\Vert :=\sum_{k=1}^{n}\left\Vert \pi _{k}\left( g\right)
\right\Vert ^{\frac{1}{k}}\text{, }\forall g\in G^{n}\left( \mathcal{V}%
\right) \text{,}
\end{equation*}%
then $G^{n}\left( \mathcal{V}\right) $ is a topological group, called the
step-$n$ nilpotent Lie group over $\mathcal{V}$.
\end{definition}

\subsection{Rough Path}

\begin{definition}[$S_{n}\left( x\right) $]
\label{Definition of Sn(X)}Suppose $x:\left[ 0,T\right] \rightarrow \mathcal{%
V}$ is a continuous bounded variation path. For integer $n\geq 1$, define
the step-$n$ signature of $x$, $S_{n}\left( x\right) :\left[ 0,T\right]
\rightarrow \left( G^{n}\left( \mathcal{V}\right) ,\left\Vert \cdot
\right\Vert \right) $ by assigning that, for $t\in \left[ 0,T\right] $,%
\begin{equation*}
S_{n}\left( x\right) _{t}=\left(
1,x_{t}-x_{0},\iint_{0<u_{1}<u_{2}<t}dx_{u_{1}}\otimes dx_{u_{2}},\dots
,\idotsint\nolimits_{0<u_{1}<\dots <u_{n}<t}dx_{u_{1}}\otimes \cdots \otimes
dx_{u_{n}}\right) \text{.}
\end{equation*}
\end{definition}

\begin{definition}[$d_{p}$ metric and $p$-variation]
For $p\geq 1$, denote $\left[ p\right] $ as the integer part of $p$. Suppose 
$X$ and $Y$ are continuous paths defined on $\left[ 0,T\right] $ taking
value in $G^{\left[ p\right] }\left( \mathcal{V}\right) $. Define%
\begin{equation*}
d_{p}\left( X,Y\right) :=\max_{1\leq k\leq \left[ p\right] }\sup_{D\subset %
\left[ 0,T\right] }\left( \sum_{j,t_{j}\in D}\left\Vert \pi _{k}\left(
X_{t_{j},t_{j+1}}\right) -\pi _{k}\left( Y_{t_{j},t_{j+1}}\right)
\right\Vert ^{\frac{p}{k}}\right) ^{\frac{1}{p}}\text{,}
\end{equation*}%
where the supremum is taken over all finite partitions $D=\left\{
t_{j}\right\} _{j=0}^{n}$ of $\left[ 0,T\right] $ with $0=t_{0}<t_{1}<\cdots
<t_{n}=T$, $n\geq 1$. With $e$ denotes the identity path (i.e. $e_{t}=1\in
G^{\left[ p\right] }\left( \mathcal{V}\right) $, $t\in \left[ 0,T\right] $),
we define the $p$-variation of $X$ on $\left[ 0,T\right] $ as 
\begin{equation*}
\left\Vert X\right\Vert _{p-var,\left[ 0,T\right] }:=d_{p}\left( X,e\right) .
\end{equation*}
\end{definition}

\begin{definition}[geometric $p$-rough path]
\label{Definition of geometric rough path}$X:\left[ 0,T\right] \rightarrow
\left( G^{\left[ p\right] }\left( \mathcal{V}\right) ,\left\Vert \cdot
\right\Vert \right) $ is called a geometric $p$-rough path, if there exists
a sequence of continuous bounded variation paths $x_{l}:\left[ 0,T\right]
\rightarrow \mathcal{V}$, $l\geq 1$, such that 
\begin{equation*}
\lim_{l\rightarrow \infty }d_{p}\left( S_{\left[ p\right] }\left(
x_{l}\right) ,X\right) =0\text{.}
\end{equation*}
\end{definition}

\begin{definition}[$C^{\protect\gamma }\left( \mathcal{V},\mathcal{U}\right) 
$]
\label{Definition Cgamma}For $\gamma >0$, we say $r:\mathcal{V}\rightarrow 
\mathcal{U}$ is $Lip\left( \gamma \right) $ and denote $r\in C^{\gamma
}\left( \mathcal{V},\mathcal{U}\right) $, if and only if $r$ is $\lfloor
\gamma \rfloor $-times Fr\'{e}chet differentiable ($\lfloor \gamma \rfloor $
denotes the largest integer which is strictly less than $\gamma $), and%
\begin{equation*}
\left\vert r\right\vert _{Lip\left( \gamma \right) }:=\max_{0\leq k\leq
\lfloor \gamma \rfloor }\left\Vert D^{k}r\right\Vert _{\infty }\vee
\left\Vert D^{\lfloor \gamma \rfloor }r\right\Vert _{\left( \gamma -\lfloor
\gamma \rfloor \right) -H\ddot{o}l}<\infty \text{,}
\end{equation*}%
where $\left\Vert \cdot \right\Vert _{\infty }$ denotes the uniform norm and 
$\left\Vert \cdot \right\Vert _{\left( \gamma -\lfloor \gamma \rfloor
\right) -H\ddot{o}l}$ denotes the $\left( \gamma -\lfloor \gamma \rfloor
\right) $-H\"{o}lder norm.

\noindent Denote $C^{0}\left( \mathcal{V},\mathcal{U}\right) $ as the space
of bounded measurable mappings from $\mathcal{V}$ to $\mathcal{U}$.
\end{definition}

\begin{definition}[$L\left( \mathcal{W},C^{\protect\gamma }\left( \mathcal{V}%
,\mathcal{U}\right) \right) $]
\label{Definition LV Cgamma}Suppose $\mathcal{U}$, $\mathcal{V}$ and $%
\mathcal{W}$ are Banach spaces. Denote $L\left( \mathcal{W},C^{\gamma
}\left( \mathcal{V},\mathcal{U}\right) \right) $ as the space of linear
mappings from $\mathcal{W}$ to $C^{\gamma }\left( \mathcal{V},\mathcal{U}%
\right) $, and denote%
\begin{equation*}
\left\vert f\right\vert _{Lip\left( \gamma \right) }:=\sup_{w\in \mathcal{W}%
,\left\Vert w\right\Vert =1}\left\vert f\left( w\right) \right\vert
_{Lip\left( \gamma \right) }\text{, }\forall f\in L\left( \mathcal{W}%
,C^{\gamma }\left( \mathcal{V},\mathcal{U}\right) \right) \text{.}
\end{equation*}
\end{definition}

We define solution of rough differential equation as in Def 5.1 in Lyons 
\cite{Lyonsnotes}.

\begin{definition}[solution of RDE]
\label{Definition of solution of RDE}Suppose $\mathcal{U}$ and $\mathcal{V}$
are two Banach spaces, $X:\left[ 0,T\right] \rightarrow \left( G^{\left[ p%
\right] }\left( \mathcal{V}\right) ,\left\Vert \cdot \right\Vert \right) $
is a geometric $p$-rough path, $f\in L\left( \mathcal{V},C^{\gamma }\left( 
\mathcal{U},\mathcal{U}\right) \right) $ for some $\gamma >p-1$ and $\xi \in 
\mathcal{U}$. Define $h:\mathcal{V}\oplus \mathcal{U}\rightarrow L\left( 
\mathcal{V}\oplus \mathcal{U},\mathcal{V}\oplus \mathcal{U}\right) $ as%
\begin{equation}
h\left( v_{1},u_{1}\right) \left( v_{2},u_{2}\right) =\left( v_{2},f\left(
v_{2}\right) \left( u_{1}+\xi \right) \right) \text{, \ }\forall
v_{1},v_{2}\in \mathcal{V}\text{, }\forall u_{1},u_{2}\in \mathcal{U}\text{.}
\label{definition of h}
\end{equation}%
Then geometric $p$-rough path $Z:\left[ 0,T\right] \rightarrow \left( G^{%
\left[ p\right] }\left( \mathcal{V}\oplus \mathcal{U}\right) ,\left\Vert
\cdot \right\Vert \right) \,$\ is said to be a solution to the rough
differential equation%
\begin{equation}
dY=f\left( Y\right) dX\text{, }Y_{0}=\xi \text{,}  \label{RDE}
\end{equation}%
if $\pi _{G^{\left[ p\right] }\left( \mathcal{V}\right) }\left( Z\right) =X$%
, and $Z$ satisfies the rough integral equation (in sense of Def 4.9 \cite%
{Lyonsnotes}):%
\begin{equation*}
Z_{t}=\int_{0}^{t}h\left( Z_{u}\right) dZ_{u}\text{, \ }t\in \left[ 0,T%
\right] \text{.}
\end{equation*}
\end{definition}

For $g\in G^{n}\left( \mathcal{V}\right) $ and $\lambda >0$, we denote $%
\delta _{\lambda }g:=1+\sum_{k=1}^{n}\lambda ^{k}\pi _{k}\left( g\right) $.

\begin{theorem}[Lyons]
\label{Theorem Lyons existence and uniqueness}When $f$ in $\left( \ref{RDE}%
\right) $ is in $L\left( \mathcal{V},C^{\gamma }\left( \mathcal{U},\mathcal{U%
}\right) \right) $ for $\gamma >p$, the solution of $\left( \ref{RDE}\right) 
$ exists uniquely. Moreover, there exists constant $C_{p,\gamma }$, such
that, for any interval $\left[ s,t\right] \subseteq \left[ 0,T\right] $
satisfying $\left\vert f\right\vert _{Lip\left( \gamma \right) }\left\Vert
X\right\Vert _{p-var,\left[ s,t\right] }\leq 1$, we have (after rescaling $f$%
, $X$ and $Y$)%
\begin{equation}
\left\Vert \left( \delta _{\left\vert f\right\vert _{Lip\left( \gamma
\right) }}X,Y\right) \right\Vert _{p-var,\left[ s,t\right] }\leq C_{p,\gamma
}\left\vert f\right\vert _{Lip\left( \gamma \right) }\left\Vert X\right\Vert
_{p-var,\left[ s,t\right] }\text{.}  \label{estimation of solution of RDE}
\end{equation}
\end{theorem}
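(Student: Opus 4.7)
The plan is to prove Theorem \ref{Theorem Lyons existence and uniqueness} by the standard universal limit theorem route, exploiting Definition \ref{Definition of geometric rough path} to reduce the rough equation to a limit of smooth ODEs, and then transferring a uniform a priori bound from the ODE level to the RDE level. Throughout, I would package the driving signal and response into the joint rough path $Z$ living above $\mathcal V\oplus\mathcal U$, so that all of $X$, $Y$ and their cross iterated integrals are controlled simultaneously by the single vector field $h$ in $\left(\ref{definition of h}\right)$.

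The first step is a scaling reduction: replacing $X$ by $\delta_{|f|_{Lip(\gamma)}}X$ and $f$ by $f/|f|_{Lip(\gamma)}$ turns the desired inequality $\left(\ref{estimation of solution of RDE}\right)$ into a clean statement for a Lip$(\gamma)$ field of unit norm on an interval where $\|X\|_{p\text{-}var}\le 1$. Next I would fix an approximating sequence $x_l$ of bounded-variation paths with $S_{[p]}(x_l)\to X$ in $d_p$, solve the classical ODE $dY_l=f(Y_l)\,dx_l$, $Y_l(0)=\xi$, and form $Z_l:=S_{[p]}\bigl((x_l,Y_l)\bigr)\in G^{[p]}(\mathcal V\oplus\mathcal U)$. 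By construction each $Z_l$ satisfies the rough integral equation against $h$.

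The core analytic step is a uniform local estimate for the $Z_l$ of the form asserted in $\left(\ref{estimation of solution of RDE}\right)$. I would produce this via the standard Davie/Lyons argument: partition $[s,t]$ by a greedy sequence of times on which the $p$-variation of $X$ reaches a fixed small threshold, compare $Z_l$ between consecutive greedy times with the degree-$[p]$ step-Euler expansion $\exp\!\bigl(\sum f^{\otimes k}(Y)\,\pi_k(X_{s,t})\bigr)$, and control the remainder through the Lip$(\gamma)$ regularity of $f$ together with $\gamma>p$. Summing the local errors and counting greedy intervals with the super-additivity of $p$-variation yields a bound for $\|(\delta_{\cdot}X,Y_l)\|_{p\text{-}var,[s,t]}$ depending only on $p,\gamma$ and $|f|_{Lip(\gamma)}\|X\|_{p\text{-}var}$, and in particular independent of $l$.

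With this uniform bound in hand, the remaining pieces are comparatively mechanical. A parallel local stability estimate, obtained by running the same Euler comparison on the difference $Z_l-Z_m$ with $|f|_{Lip(\gamma)}$ replaced by Lipschitz constants of the derivatives, shows that $\{Z_l\}$ is Cauchy in $d_p$; completeness of the space of geometric $p$-rough paths gives a limit $Z$ which inherits the a priori bound and, by continuity of rough integration (Def 4.9 of \cite{Lyonsnotes}), satisfies $Z_t=\int_0^t h(Z_u)\,dZ_u$. Uniqueness follows from the same stability estimate applied to any two solutions on each greedy subinterval, iterated along the partition of $[0,T]$. The main obstacle I expect is the uniform local a priori estimate in the third step: getting the constant $C_{p,\gamma}$ to depend only on $p$ and $\gamma$ (and in particular to be independent of the dimension of $\mathcal U,\mathcal V$) requires that every application of Taylor expansion to $f$ and every bound on iterated integrals of $X$ be done through operator/projective tensor norms rather than through a basis, which is exactly the modification that the rest of the paper is devoted to.
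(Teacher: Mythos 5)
The paper does not prove this theorem: it is Lyons's universal limit theorem, stated and used as a black box (citing Def~5.1, Thm~4.12, Prop~5.9 and Thm~5.3 of \cite{Lyonsnotes}), and the remark that follows it explains that the estimate \eqref{estimation of solution of RDE} comes from Picard iteration on rough integrals together with lower semi-continuity of $p$-variation. Your plan is a genuinely different route: approximate $X$ by bounded-variation paths, solve the corresponding ODEs, prove a uniform local Euler estimate against a greedy partition (Davie/Friz--Victoir style), then obtain existence via a $d_p$-Cauchy argument and uniqueness via the same stability estimate. That blueprint is sound, but in the dimension-free Banach-space setting of this paper it is close to re-deriving the rest of the manuscript rather than a shortcut: the uniform local estimate you sketch in your third step is essentially Lemma~\ref{Lemma important}, which the paper spends several pages proving precisely because the Davie/Friz--Victoir version is dimension-dependent; and the ``parallel local stability estimate'' in your fourth step would need its own dimension-free treatment and is where the strict hypothesis $\gamma>p$ (rather than $\gamma>p-1$) does real work, since it amounts to Lipschitz continuity of the It\^o--Lyons map and costs a derivative of $f$ beyond what the a~priori Euler bound controls. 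Calling that step ``comparatively mechanical'' undersells it. The paper's choice — take existence/uniqueness and \eqref{estimation of solution of RDE} as given from \cite{Lyonsnotes}, and reserve the smooth-approximation machinery solely for transporting the ODE Euler estimate of Lemma~\ref{Lemma important} to the rough setting in Theorem~\ref{Theorem rough Euler expansion of solution of RDE} — is the more economical division of labour.
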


\begin{remark}
The unique solution of $\left( \ref{RDE}\right) $ is recovered by a sequence
of rough integrals. Then based on Thm 4.12 and Prop 5.9 in \cite{Lyonsnotes}
and by using lower semi-continuity of $p$-variation, the constant $%
C_{p,\gamma }$ in $\left( \ref{estimation of solution of RDE}\right) $ is an
absolute constant which only depends on $p$ and $\gamma $, and is finite
whenever $\gamma >p-1$.
\end{remark}

When $\mathcal{U}$ and $\mathcal{V}$ are finite dimensional spaces, for any $%
f\in L\left( \mathcal{V},C^{\gamma }\left( \mathcal{U},\mathcal{U}\right)
\right) $, $\gamma >p-1$, there exists a solution to $\left( \ref{RDE}%
\right) $ which satisfies $\left( \ref{estimation of solution of RDE}\right) 
$. Indeed, based on Prop 5.9 \cite{Lyonsnotes}, when $f$ is $Lip\left(
\gamma \right) $ for $\gamma >p-1$, the sequence of Picard iterations $%
\left\{ Z^{n}\right\} _{n}:\left[ 0,T\right] \rightarrow G^{\left[ p\right]
}\left( \mathcal{V}\oplus \mathcal{U}\right) $, define recursively as rough
integrals: (with $h$ defined at $\left( \ref{definition of h}\right) $)%
\begin{eqnarray*}
Z_{t}^{0} &=&\left( X_{t},0\right) \text{, \ }t\in \left[ 0,T\right] \text{,}
\\
Z_{t}^{n+1} &=&\int_{0}^{t}h\left( Z_{u}^{n}\right) dZ_{u}^{n}\text{, \ }%
t\in \left[ 0,T\right] \text{, \ }n\geq 0\text{,}
\end{eqnarray*}%
are uniformly bounded and equi-continuous. When $\mathcal{V}$ and $\mathcal{U%
}$ are finite-dimensional spaces, bounded sets in $G^{\left[ p\right]
}\left( \mathcal{V}\oplus \mathcal{U}\right) $ are relatively compact. Thus,
based on Arzel\`{a}-Ascoli theorem, there exists a subsequence $\left\{
Z^{n_{k}}\right\} _{k}$ which converge uniformly (denoted the limit as $Z$).
Then by spelling out the almost-multiplicative functional (associated with
the Picard iteration) and letting $k$ tends to infinity, one can prove that $%
Z$ is a solution to the rough differential equation $\left( \ref{RDE}\right) 
$. Then based on Thm 4.12 and Prop 5.9 in \cite{Lyonsnotes} and by using
lower semi-continuity of $p$-variation, the estimate $\left( \ref{estimation
of solution of RDE}\right) $ holds for $Z$.

When $\mathcal{U}$ is a Banach space and when $f$ in $\left( \ref{RDE}%
\right) $ is $Lip\left( \gamma \right) $ for $\gamma \in \left( p-1,p\right) 
$, there does not always exist a solution to $\left( \ref{RDE}\right) $.
Godunov \cite{Godunov} proved that, "each Banach space in which Peano's
theorem is true is finite-dimensional". Shkarin \cite{Shkarin} (in Cor 1.5)
proved that, for any real infinite dimensional Banach space (denoted as $%
\mathcal{V}$), which has a complemented subspace with an unconditional
Schauder basis, and for any $\alpha \in \left( 0,1\right) $, there exists $%
\alpha $-H\"{o}lder continuous function $f:\mathcal{V}\rightarrow \mathcal{V}
$, such that the equation $\dot{x}=f\left( x\right) $ has no solution in any
interval of the real line. Based on Shkarin \cite{Shkarin} (Rrk 1.4), $L_{p}%
\left[ 0,1\right] $ ($1\leq p<\infty $) and $C\left[ 0,1\right] $ are
examples of such Banach spaces, and "roughly speaking, all infinite
dimensional Banach spaces, which naturally appear in analysis" fall into
this category.

\subsection{Differential Operator}

For $\gamma \geq 0$, recall $C^{\gamma }\left( \mathcal{U},\mathcal{U}%
\right) $ in Definition \ref{Definition Cgamma}, and that $L\left( \mathcal{V%
},C^{\gamma }\left( \mathcal{U},\mathcal{U}\right) \right) $ denotes the
space of linear mappings from $\mathcal{V}$ to $C^{\gamma }\left( \mathcal{U}%
,\mathcal{U}\right) $. With $f\in L\left( \mathcal{V},C^{\gamma }\left( 
\mathcal{U},\mathcal{U}\right) \right) $ and integer $k\leq \gamma +1$, we
clarify the meaning of differential operator $f^{\circ k}\left( v\right) $
for $v\in \mathcal{V}^{\otimes k}$. For $r\in C^{k}\left( \mathcal{U},%
\mathcal{U}\right) $ and $j=0,1,\dots ,k$, $D^{j}r\in L\left( \mathcal{U}%
^{\otimes j},C^{k-j}\left( \mathcal{U},\mathcal{U}\right) \right) $.

\begin{notation}[$\mathcal{D}^{k}\left( \mathcal{U}\right) $]
For integer $k\geq 0$, denote $\mathcal{D}^{k}\left( \mathcal{U}\right) $ as
the set of $k$th order differential operators (on $Lip\left( k\right) $
functions from $\mathcal{U}$ to $\mathcal{U}$). More specifically, $p\in 
\mathcal{D}^{k}\left( \mathcal{U}\right) $ if and only if $p:C^{k}\left( 
\mathcal{U},\mathcal{U}\right) \rightarrow C^{0}\left( \mathcal{U},\mathcal{U%
}\right) $ and there exist bounded $p^{j}:\mathcal{U}\rightarrow \mathcal{U}%
^{\otimes j}$, $j=0,1,\dots ,k$, with $p_{k}\not\equiv 0$, such that%
\begin{equation*}
p\left( r\right) \left( u\right) =\sum_{j=0}^{k}\left( D^{j}r\right) \left(
p_{j}\left( u\right) \right) \left( u\right) \text{, }\forall u\in \mathcal{U%
}\text{, }\forall r\in C^{k}\left( \mathcal{U},\mathcal{U}\right) \text{.}
\end{equation*}%
We define norm $\left\vert \cdot \right\vert _{k}$ on $\mathcal{D}^{k}\left( 
\mathcal{U}\right) $ as%
\begin{equation*}
\left\vert p\right\vert _{k}:=\max_{j=0,1,\dots ,k}\sup_{u\in \mathcal{U}%
}\left\Vert p_{j}\left( u\right) \right\Vert \text{, }\forall p\in \mathcal{D%
}^{k}\left( \mathcal{U}\right) \text{.}
\end{equation*}%
Then $\mathcal{D}^{k}\left( \mathcal{U}\right) $ can be extended to a Banach
space $\left( \mathcal{D}^{k}\left( \mathcal{U}\right) ,\left\vert \cdot
\right\vert _{k}\right) $ (with the natural addition and scalar
multiplication).
\end{notation}

\begin{definition}[composition]
\label{Definition differential operator f}Suppose $p^{1}\in \mathcal{D}%
^{j_{1}}\left( \mathcal{U}\right) $ and $p^{2}\in \mathcal{D}^{j_{2}}\left( 
\mathcal{U}\right) $ for integers $j_{1}\geq 0$, $j_{2}\geq 0$. Define the
composition of $p^{1}\circ p^{2}\in \mathcal{D}^{j_{1}+j_{2}}\left( \mathcal{%
U}\right) $ as%
\begin{equation*}
\left( p^{1}\circ p^{2}\right) \left( r\right) :=p^{1}\left( p^{2}\left(
r\right) \right) \text{, }\forall r\in C^{j_{1}+j_{2}}\left( \mathcal{U},%
\mathcal{U}\right) \text{.}
\end{equation*}%
When $p\in \mathcal{D}^{j}\left( \mathcal{U}\right) $, $j\geq 0$, we define
the differential operator $p^{\circ k}\in \mathcal{D}^{k\times j}\left( 
\mathcal{U}\right) $ for integer $k\geq 1$ as 
\begin{equation}
p^{\circ 1}:=p\text{ \ and \ }p^{\circ k}:=p\circ p^{\circ \left( k-1\right)
}\text{, }k\geq 2\text{.}  \label{Notation of composition of f with itself}
\end{equation}
\end{definition}

Composition of differential operators is associative, i.e. $\left(
p^{1}\circ p^{2}\right) \circ p^{3}=p^{1}\circ \left( p^{2}\circ
p^{3}\right) $.

\begin{definition}[$f^{\circ k}$]
\label{Definition of f circ k}Suppose $f\in L\left( \mathcal{V},C^{\gamma
}\left( \mathcal{U},\mathcal{U}\right) \right) $ for some $\gamma \geq 0$.
Then for any $v\in \mathcal{V}$, we treat $f\left( v\right) $ as a first
order differential operator (i.e. in $\mathcal{D}^{1}\left( \mathcal{U}%
\right) $), and define%
\begin{equation*}
f\left( v\right) \left( r\right) \left( u\right) :=\left( Dr\right) \left(
f\left( v\right) \left( u\right) \right) \left( u\right) \text{, }\forall
u\in \mathcal{U}\text{, }\forall r\in C^{1}\left( \mathcal{U},\mathcal{U}%
\right) \text{.}
\end{equation*}%
For integer $k\in 1,2,\dots ,\left[ \gamma \right] +1$ and $\left\{
v_{j}\right\} _{j=1}^{k}\subset \mathcal{V}$, we define $f^{\circ k}\left(
v_{1}\otimes \cdots \otimes v_{k}\right) \in \mathcal{D}^{k}\left( \mathcal{U%
}\right) $ as 
\begin{equation}
f^{\circ k}\left( v_{1}\otimes \cdots \otimes v_{k}\right) :=f\left(
v_{1}\right) \circ f\left( v_{2}\right) \circ \cdots \circ f\left(
v_{k}\right) .  \label{definition of fcirc k on elements}
\end{equation}%
Then we denote $f^{\circ k}\in L\left( \mathcal{V}^{\otimes k},\left( 
\mathcal{D}^{k}\left( \mathcal{U}\right) ,\left\vert \cdot \right\vert
_{k}\right) \right) $ as the unique continuous linear operator satisfying $%
\left( \ref{definition of fcirc k on elements}\right) $.
\end{definition}

\section{Main Result}

In this manuscript, we work with the first level (or "path" level) solution
of rough differential equations.

Firstly, we prove a lemma for ordinary differential equations. Then after
applying universal limit theorem (Thm 5.3 \cite{Lyonsnotes}), this lemma
leads to similar estimates of rough differential equations. The proof of
this lemma is in the same spirit as Lemma 2.4(a) in \cite{Davie A. M}, Lemma
16 in \cite{Friz and Victoir} and Lemma 10.7 in \cite{Friz and Victoir book}%
, only that we use the ordinary differential equation $\left( \ref%
{Definition of approximating ode}\right) $ for the approximation.

We denote $\mathcal{U}$ and $\mathcal{V}$ as two Banach spaces. For $p\geq 1$%
, denote $\left[ p\right] $ as the integer part of $p$. For $\gamma >0$,
denote $\lfloor \gamma \rfloor $ as the largest integer which is strictly
less than $\gamma $. Denote $I_{d}:\mathcal{U}\rightarrow \mathcal{U}\ $as
the identity function, i.e. $I_{d}\left( u\right) =u$, $\forall u\in 
\mathcal{U}$. For $f\in L\left( \mathcal{V},C^{\gamma }\left( \mathcal{U},%
\mathcal{U}\right) \right) $, integer $k\leq \gamma +1$ and $v\in \mathcal{V}%
^{\otimes k}$, recall the differential operator $f^{\circ k}\left( v\right) $
defined in Definition \ref{Definition of f circ k}.

\begin{lemma}
\label{Lemma important}Suppose $\mathcal{U}$ and $\mathcal{V}$ are two
Banach spaces, $x:\left[ 0,T\right] \rightarrow \mathcal{V}$ is a continuous
bounded variation path, $f\in L\left( \mathcal{V},C^{\gamma }\left( \mathcal{%
U},\mathcal{U}\right) \right) $ for $\gamma >1$, and $\xi \in \mathcal{U}$.
Denote $y:\left[ 0,T\right] \rightarrow \mathcal{U}$ as the\ unique solution
to the ordinary differential equation%
\begin{equation}
dy=f\left( y\right) dx\text{, \ }y_{0}=\xi \in \mathcal{U}\text{.}
\label{ODE}
\end{equation}%
Then for any $p\in \lbrack 1,\gamma +1)$, there exists a constant $%
C_{p,\gamma }$, which only depends on $p$ and $\gamma $, such that, for any $%
0\leq s<t\leq T$, if we denote $y^{s,t}:\left[ 0,1\right] \rightarrow 
\mathcal{U}$ as the unique solution of the ordinary differential equation:
(with $y_{s}$ denotes the value of $y$ in $\left( \ref{ODE}\right) $ at
point $s$) 
\begin{eqnarray}
dy_{u}^{s,t} &=&\left( \sum_{k=1}^{\lfloor \gamma \rfloor }f^{\circ k}\pi
_{k}\left( \log _{\lfloor \gamma \rfloor +1}\left( S_{\lfloor \gamma \rfloor
+1}\left( x\right) _{s,t}\right) \right) \left( I_{d}\right) \left(
y_{u}^{s,t}\right) \right) du\text{, }u\in \left[ 0,1\right] \text{, }
\label{Definition of approximating ode} \\
y_{0}^{s,t} &=&y_{s}+f^{\circ \left( \lfloor \gamma \rfloor +1\right) }\pi
_{\lfloor \gamma \rfloor +1}\left( \log _{\lfloor \gamma \rfloor +1}\left(
S_{\lfloor \gamma \rfloor +1}\left( x\right) _{s,t}\right) \right) \left(
I_{d}\right) \left( y_{s}\right) \text{,}  \notag
\end{eqnarray}%
then 
\begin{gather}
\left( 1\right) ,\left\Vert y_{t}-y_{1}^{s,t}\right\Vert \leq C_{p,\gamma
}\left\vert f\right\vert _{Lip\left( \gamma \right) }^{\gamma +1}\left\Vert
S_{\left[ p\right] }\left( x\right) \right\Vert _{p-var,\left[ s,t\right]
}^{\gamma +1}\text{,}  \label{ODE approximation of main RDE} \\
\left( 2\right) ,\left\Vert y_{t}-y_{s}-\sum_{k=1}^{\lfloor \gamma \rfloor
+1}f^{\circ k}\pi _{k}\left( S_{\lfloor \gamma \rfloor +1}\left( x\right)
_{s,t}\right) \left( I_{d}\right) \left( y_{s}\right) \right\Vert \leq
C_{p,\gamma }\left\vert f\right\vert _{Lip\left( \gamma \right) }^{\gamma
+1}\left\Vert S_{\left[ p\right] }\left( x\right) \right\Vert _{p-var,\left[
s,t\right] }^{\gamma +1}\text{.}  \notag
\end{gather}
\end{lemma}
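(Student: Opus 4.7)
I plan to prove (2) first as the classical Davie/Friz--Victoir-type Taylor expansion for the ODE $(\ref{ODE})$, and then deduce (1) from (2) by expanding $y_{1}^{s,t}-y_{s}$ in the same form and invoking the triangle inequality. The algebraic backbone consists of the morphism identity $f^{\circ k_{1}}(a)\circ f^{\circ k_{2}}(b)=f^{\circ(k_{1}+k_{2})}(a\otimes b)$ (immediate from Definition~\ref{Definition of f circ k}) and the truncated-algebra identity $\exp_{\lfloor\gamma\rfloor+1}\circ\log_{\lfloor\gamma\rfloor+1}=\mathrm{id}$ on $G^{\lfloor\gamma\rfloor+1}(\mathcal{V})$.

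\emph{Step 1: proof of (2).} I would iterate the Duhamel identity $y_{t}-y_{s}=\int_{s}^{t}f(y_{u})\,dx_{u}$ by repeated substitution, and at each stage Taylor-expand the integrand in its $\mathcal{U}$-argument around $y_{s}$ using the $C^{\gamma}$-regularity of $f$. After $\lfloor\gamma\rfloor+1$ substitutions, the iterated integrals of $dx$ collect, by definition of the signature and of $f^{\circ k}$, into $\sum_{k=1}^{\lfloor\gamma\rfloor+1}f^{\circ k}\pi_{k}(S_{\lfloor\gamma\rfloor+1}(x)_{s,t})(I_{d})(y_{s})$. The leftover pairs a $(\gamma-\lfloor\gamma\rfloor)$-H\"older increment of the top Fr\'echet derivative of $f$ against an iterated path integral of length $\lfloor\gamma\rfloor+1$, and is bounded by $C_{p,\gamma}|f|_{Lip(\gamma)}^{\gamma+1}\|x\|_{1-var,[s,t]}^{\gamma+1}$. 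Since $x$ is continuous bounded variation, a universal comparison dominates $\|x\|_{1-var,[s,t]}$ by $\|S_{[p]}(x)\|_{p-var,[s,t]}$, which yields (2).

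\emph{Step 2: expansion of $y_{1}^{s,t}$.} Set $L:=\log_{\lfloor\gamma\rfloor+1}(S_{\lfloor\gamma\rfloor+1}(x)_{s,t})$ and $L^{\leq}:=\sum_{k=1}^{\lfloor\gamma\rfloor}\pi_{k}L$, so that the approximating ODE reads $\dot y_{u}^{s,t}=W(y_{u}^{s,t})$ with $W(y)=\sum_{k=1}^{\lfloor\gamma\rfloor}f^{\circ k}(\pi_{k}L)(I_{d})(y)$. Applying the Picard/Taylor scheme of Step~1 to this ODE on $[0,1]$, and then regrouping the graded contributions via the morphism property of $f^{\circ}$ together with the Lie structure of the $\pi_{k}L$ (these lie in $[\mathcal{V}]^{k}$ by Definition~\ref{Definition of nilpotent Lie group}), yields
\begin{equation*}
y_{1}^{s,t}=y_{0}^{s,t}+\sum_{k=1}^{\lfloor\gamma\rfloor+1}f^{\circ k}\pi_{k}\bigl(\exp_{\lfloor\gamma\rfloor+1}(L^{\leq})\bigr)(I_{d})(y_{0}^{s,t})+R_{1}
\end{equation*}
with $\|R_{1}\|\leq C_{p,\gamma}|f|_{Lip(\gamma)}^{\gamma+1}\|S_{[p]}(x)\|_{p-var,[s,t]}^{\gamma+1}$. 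A direct computation in the truncated tensor algebra (any product containing two copies of $\pi_{\lfloor\gamma\rfloor+1}L$ has degree $>\lfloor\gamma\rfloor+1$ and is therefore truncated away) gives $\exp_{\lfloor\gamma\rfloor+1}(L^{\leq})=S_{\lfloor\gamma\rfloor+1}(x)_{s,t}-\pi_{\lfloor\gamma\rfloor+1}L$, so the initial-condition term $f^{\circ(\lfloor\gamma\rfloor+1)}(\pi_{\lfloor\gamma\rfloor+1}L)(I_{d})(y_{s})$ sitting inside $y_{0}^{s,t}$ exactly restores the missing top-degree contribution. A final Taylor expansion of each $f^{\circ k}(\pi_{k}S)(I_{d})$ around $y_{s}$, absorbing $y_{0}^{s,t}-y_{s}=O(|f|_{Lip(\gamma)}^{\lfloor\gamma\rfloor+1}\|S_{[p]}(x)\|^{\lfloor\gamma\rfloor+1})$ (which is of the required order because $\gamma+1\leq\lfloor\gamma\rfloor+2$), yields the expansion $y_{1}^{s,t}=y_{s}+\sum_{k=1}^{\lfloor\gamma\rfloor+1}f^{\circ k}\pi_{k}(S_{\lfloor\gamma\rfloor+1}(x)_{s,t})(I_{d})(y_{s})+R_{2}$ with the same bound on $R_{2}$. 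Comparing with (2) by the triangle inequality then gives (1).

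\emph{Main obstacle.} The delicate step is the regrouping in Step~2: the Picard iteration of the approximating ODE produces a sum of composite operators (products of $W$ and its derivatives), and one must invoke the morphism property of $f^{\circ}$ together with the Lie structure of $L^{\leq}$ to identify the graded contributions of degree $\leq\lfloor\gamma\rfloor+1$ with $f^{\circ}\bigl(\exp_{\lfloor\gamma\rfloor+1}(L^{\leq})\bigr)(I_{d})$, while uniformly bounding the residual contributions (higher-degree terms in the re-summation, the Taylor remainder of the ODE iteration, and the $y_{0}^{s,t}\neq y_{s}$ discrepancy) by $C_{p,\gamma}|f|_{Lip(\gamma)}^{\gamma+1}\|S_{[p]}(x)\|_{p-var,[s,t]}^{\gamma+1}$.
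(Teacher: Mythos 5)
There is a genuine gap, and it is located precisely where the proposal says "a universal comparison dominates $\left\Vert x\right\Vert _{1-var,\left[ s,t\right] }$ by $\left\Vert S_{\left[ p\right] }\left( x\right) \right\Vert _{p-var,\left[ s,t\right] }$." That inequality goes the wrong way. For a bounded variation path one has $\left\Vert S_{\left[ p\right] }\left( x\right) \right\Vert _{p-var,\left[ s,t\right] }\leq C\left\Vert x\right\Vert _{1-var,\left[ s,t\right] }$ (the lift's $p$-variation is dominated by, and is typically far smaller than, the raw $1$-variation), and there is no bound in the other direction. Your Step 1 — iterating Duhamel and Taylor-expanding around $y_s$ — does indeed produce a remainder of size $C\left\vert f\right\vert _{Lip\left( \gamma \right) }^{\gamma +1}\left\Vert x\right\Vert _{1-var,\left[ s,t\right] }^{\gamma +1}$, and this is exactly what the paper records at $\left( \ref{inner estimation of ODE1}\right) $; but that $1$-variation bound is a much weaker statement than $\left( \ref{ODE approximation of main RDE}\right) $, and your proposed conversion is false, so Step 1 does not deliver estimate (2).

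Upgrading from the $1$-variation-controlled remainder to a $p$-variation-controlled one is the entire content of this lemma and cannot be a one-line comparison. The paper does it via a Davie/sewing argument that your proposal omits: set $\Gamma _{s,t}:=y_{t}-y_{1}^{s,t}$, use Lemmas \ref{Lemma Euler expansion of the solution of ODE}--\ref{Lemma uniform bound in p-var of solution of ode} to establish the almost-additivity bound $\left( \ref{inner estimation of Gamma}\right) $, namely $\left\Vert \Gamma _{s,t}\right\Vert \leq \left( 1+C_{\gamma }\omega \left( s,t\right) ^{1/p}\right) \left( \left\Vert \Gamma _{s,u}\right\Vert +\left\Vert \Gamma _{u,t}\right\Vert \right) +C_{p,\gamma }\omega \left( s,t\right) ^{\left( \gamma +1\right) /p}$ with $\omega \left( s,t\right) =\left\Vert S_{\left[ p\right] }\left( x\right) \right\Vert _{p-var,\left[ s,t\right] }^{p}$, and then run a dyadic-subdivision limit in which the $1$-variation bound of your Step 1 is used only to show that $\overline{\lim }_{n}\sum_{j}\left\Vert \Gamma _{t_{j}^{n},t_{j+1}^{n}}\right\Vert =0$, not to produce the final estimate. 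Your Step 2 (comparing $y_{1}^{s,t}$ to the Euler polynomial via the $\exp /\log $ algebra) is sound as algebra and corresponds to the paper's Lemma \ref{Lemma Euler expansion of the solution of ODE}, but it is an ingredient of the almost-additivity bound rather than a route around the sewing argument; without that argument, (1) and (2) remain unproved.
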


The proof of Lemma \ref{Lemma important} starts from page \pageref{Proof of
Lemma important}. Since $f\in L\left( \mathcal{V},C^{\gamma }\left( \mathcal{%
U},\mathcal{U}\right) \right) $, it might be more appropriate to write $%
\left( \ref{ODE}\right) $ as $dy=f\left( dx\right) \left( y\right) $. We
keep it in the current form so that it is consistent with the classical
notation of ordinary differential equations.

\begin{remark}
We used $\left( \ref{Definition of approximating ode}\right) $ instead of
the ordinary differential equation 
\begin{eqnarray}
d\widetilde{y}_{u}^{s,t} &=&\left( \sum_{k=1}^{\lfloor \gamma \rfloor
+1}f^{\circ k}\pi _{k}\left( \log _{\lfloor \gamma \rfloor +1}\left(
S_{\lfloor \gamma \rfloor +1}\left( x\right) _{s,t}\right) \right) \left(
I_{d}\right) \left( \widetilde{y}_{u}^{s,t}\right) \right) du\text{, }u\in %
\left[ 0,1\right] \text{,}  \label{inner another ode} \\
\widetilde{y}_{0}^{s,t} &=&y_{s}\text{,}  \notag
\end{eqnarray}%
because $\mathcal{U}$ is a Banach space, and $\left( \ref{inner another ode}%
\right) $ may not has a solution (Cor 1.5 in Shkarin \cite{Shkarin}). If $%
\left( \ref{inner another ode}\right) $ has a solution (e.g. when $\mathcal{U%
}$ is finite dimensional), then $\left( \ref{ODE approximation of main RDE}%
\right) $ holds with $y_{1}^{s,t}$ replaced by $\widetilde{y}_{1}^{s,t}$.
\end{remark}

\begin{remark}
When $\mathcal{V}$ is $%
\mathbb{R}
^{d}$ and $\mathcal{U}$ is $%
\mathbb{R}
^{e}$ in Lemma \ref{Lemma important}, suppose $f=\left( f^{1},\dots
,f^{d}\right) \in L\left( 
\mathbb{R}
^{d},C^{\gamma }\left( 
\mathbb{R}
^{e},\mathcal{%
\mathbb{R}
}^{e}\right) \right) $. For $i=1,\dots ,d$, we treat $f^{i}=\left(
f_{1}^{i},\dots ,f_{e}^{i}\right) $ as a first order differential operator: $%
\sum_{j=1}^{e}f_{j}^{i}\frac{\partial }{\partial y_{j}}$. Then it can be
checked that (with $x=\left( x^{1},\dots ,x^{d}\right) :\left[ 0,T\right]
\rightarrow 
\mathbb{R}
^{d}$)%
\begin{equation*}
f^{\circ k}\pi _{k}\left( S_{\lfloor \gamma \rfloor +1}\left( x\right)
_{s,t}\right) \left( I_{d}\right) =\sum_{i_{1},\dots ,i_{k}\in \left\{
1,\dots ,d\right\} }\left( f^{i_{1}}\circ \cdots \circ f^{i_{k}}\right)
\left( I_{d}\right) \idotsint\nolimits_{s<u_{1}<\cdots
<u_{k}<t}dx_{u_{1}}^{i_{1}}\cdots dx_{u_{k}}^{i_{k}}\text{,}
\end{equation*}%
and our formulation coincides with the formulation in \cite{Davie A. M}, 
\cite{Friz and Victoir} and \cite{Friz and Victoir book}.
\end{remark}

The theorem below follows from universal limit theorem (Thm 5.3 \cite%
{Lyonsnotes}) and Lemma \ref{Lemma important}. Suppose $X:\left[ 0,T\right]
\rightarrow G^{\left[ p\right] }\left( \mathcal{V}\right) $ is a geometric $%
p $-rough path. For integer $n\geq \left[ p\right] $, we denote $S_{n}\left(
X\right) \ $as the unique enhancement of $X$ to a continuous path with
finite $p$-variation taking value in $G^{n}\left( \mathcal{V}\right) $ (Thm
3.7 \cite{Lyonsnotes}).

\begin{theorem}
\label{Theorem rough Euler expansion of solution of RDE}Suppose $f\in
L\left( \mathcal{V},C^{\gamma }\left( \mathcal{U},\mathcal{U}\right) \right) 
$ for $\gamma >1$, $X:\left[ 0,T\right] \rightarrow G^{\left[ p\right]
}\left( \mathcal{V}\right) $ is a geometric $p$-rough path for some $p\in
\lbrack 1,\gamma )$, and $\xi \in \mathcal{U}$. Denote $Z$ as the unique
solution (in the sense of Definition \ref{Definition of solution of RDE}) of
the rough differential equation%
\begin{equation}
dY=f\left( Y\right) dX\text{, \ }Y_{0}=\xi \text{.}  \label{main RDE}
\end{equation}
Denote $Y:=\pi _{G^{\left[ p\right] }\left( \mathcal{U}\right) }\left(
Z\right) $. Then there exists a constant $C_{p,\gamma }$, which only depends
on $p$ and $\gamma $, such that, for any $0\leq s\leq t\leq T$, if denote $%
y^{s,t}:\left[ 0,1\right] \rightarrow \mathcal{U}$ as the unique solution of
the ordinary differential equation: 
\begin{eqnarray*}
dy_{u}^{s,t} &=&\left( \sum_{k=1}^{\lfloor \gamma \rfloor }f^{\circ k}\pi
_{k}\left( \log _{\lfloor \gamma \rfloor +1}\left( S_{\lfloor \gamma \rfloor
+1}\left( X\right) _{s,t}\right) \right) \left( I_{d}\right) \left(
y_{u}^{s,t}\right) \right) du\text{, }u\in \left[ 0,1\right] \text{, } \\
y_{0}^{s,t} &=&\pi _{1}\left( Y_{s}\right) +f^{\circ \left( \lfloor \gamma
\rfloor +1\right) }\pi _{\lfloor \gamma \rfloor +1}\left( \log _{\lfloor
\gamma \rfloor +1}\left( S_{\lfloor \gamma \rfloor +1}\left( X\right)
_{s,t}\right) \right) \left( I_{d}\right) \left( \pi _{1}\left( Y_{s}\right)
\right) \text{,}
\end{eqnarray*}%
then we have 
\begin{gather*}
\left( 1\right) ,\left\Vert \pi _{1}\left( Y_{t}\right)
-y_{1}^{s,t}\right\Vert \leq C_{p,\gamma }\left\vert f\right\vert
_{Lip\left( \gamma \right) }^{\gamma +1}\left\Vert X\right\Vert _{p-var,
\left[ s,t\right] }^{\gamma +1}\text{,} \\
\left( 2\right) ,\left\Vert \pi _{1}\left( Y_{s,t}\right)
-\sum_{k=1}^{\lfloor \gamma \rfloor +1}f^{\circ k}\pi _{k}\left( S_{\lfloor
\gamma \rfloor +1}\left( X\right) _{s,t}\right) \left( I_{d}\right) \left(
\pi _{1}\left( Y_{s}\right) \right) \right\Vert \leq C_{p,\gamma }\left\vert
f\right\vert _{Lip\left( \gamma \right) }^{\gamma +1}\left\Vert X\right\Vert
_{p-var,\left[ s,t\right] }^{\gamma +1}\text{.}
\end{gather*}
\end{theorem}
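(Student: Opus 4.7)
The plan is to apply Lemma \ref{Lemma important} along a sequence of bounded variation paths approximating $X$, then pass to the limit using the universal limit theorem (Thm 5.3 \cite{Lyonsnotes}) on the rough-differential-equation side and continuous dependence of ODEs on parameters on the auxiliary-ODE side. By Definition \ref{Definition of geometric rough path}, choose continuous bounded variation paths $x_{l}:\left[ 0,T\right] \rightarrow \mathcal{V}$ with $d_{p}(S_{\left[ p\right] }(x_{l}),X)\rightarrow 0$. For each $l$, let $z^{l}:\left[ 0,T\right] \rightarrow \mathcal{U}$ solve $dz^{l}=f(z^{l})\,dx_{l}$ with $z^{l}_{0}=\xi $, and let $y^{s,t,l}:\left[ 0,1\right] \rightarrow \mathcal{U}$ be the solution of the auxiliary ODE (\ref{Definition of approximating ode}) with $x_{l}$ in place of $x$. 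Lemma \ref{Lemma important} then supplies, for each $l$, the analogues of inequalities (1) and (2) of Theorem \ref{Theorem rough Euler expansion of solution of RDE}, with $z^{l}$, $y^{s,t,l}$, and $\left\Vert S_{\left[ p\right] }(x_{l})\right\Vert _{p-var,\left[ s,t\right] }^{\gamma +1}$ on the right-hand side.

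Next I pass to the limit term by term. The universal limit theorem applied to the ODE $dz^{l}=f(z^{l})\,dx_{l}$ gives that the enhanced It\^{o} map is continuous in $d_{p}$, so the full solution converges in $d_{p}$ to the RDE solution $Z$ from which $Y=\pi _{G^{\left[ p\right] }(\mathcal{U})}(Z)$ is obtained; in particular the first-level values of $z^{l}$ converge to those of $\pi _{1}(Y)$ uniformly in $s,t$. By Thm 3.7 \cite{Lyonsnotes}, the enhancement to level $\lfloor \gamma \rfloor +1$ is $d_{p}$-continuous, hence $S_{\lfloor \gamma \rfloor +1}(x_{l})_{s,t}\rightarrow S_{\lfloor \gamma \rfloor +1}(X)_{s,t}$. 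Since $d_{p}$-convergence on $\left[ 0,T\right] $ dominates $d_{p}$-convergence on every subinterval $\left[ s,t\right] $, the triangle inequality for $d_{p}$ yields $\left\Vert S_{\left[ p\right] }(x_{l})\right\Vert _{p-var,\left[ s,t\right] }\rightarrow \left\Vert X\right\Vert _{p-var,\left[ s,t\right] }$, so the right-hand sides converge to $C_{p,\gamma }\left\vert f\right\vert _{Lip(\gamma )}^{\gamma +1}\left\Vert X\right\Vert _{p-var,\left[ s,t\right] }^{\gamma +1}$.

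It remains to show $y^{s,t,l}_{1}\rightarrow y^{s,t}_{1}$. The auxiliary ODE on $\left[ 0,1\right] $ has vector field of the form $u\mapsto \sum_{k=1}^{\lfloor \gamma \rfloor }f^{\circ k}\pi _{k}(\log _{\lfloor \gamma \rfloor +1}(S_{\lfloor \gamma \rfloor +1}(x_{l})_{s,t}))(I_{d})(u)$, which is Lipschitz in $u$ (as $\gamma >1$ makes each $f^{\circ k}(I_{d})$ at least Lipschitz for $1\leq k\leq \lfloor \gamma \rfloor $) and depends continuously on the parameter $S_{\lfloor \gamma \rfloor +1}(x_{l})_{s,t}$; the initial condition $y^{s,t,l}_{0}$ likewise depends continuously on that signature and on the starting point $z^{l}_{s}$. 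Standard continuous dependence of ODE solutions on vector field and initial data over the compact interval $\left[ 0,1\right] $ then yields $y^{s,t,l}_{1}\rightarrow y^{s,t}_{1}$, and passing to the limit in the $l$-indexed inequalities delivers both claims of the theorem.

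The main conceptual obstacle is that $\mathcal{U}$ and $\mathcal{V}$ are merely Banach spaces, so there is no compactness of $G^{n}(\mathcal{V}\oplus \mathcal{U})$ to fall back on; this is exactly why the universal limit theorem is indispensable, as it supplies $d_{p}$-convergence of the RDE solution without any dimension-dependent compactness argument. Since Lemma \ref{Lemma important} is itself dimension-free, the constant $C_{p,\gamma }$ survives the passage to the limit intact.
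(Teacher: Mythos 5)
Your proposal is correct and follows essentially the same route as the paper's own proof: approximate $X$ by smooth paths $x_l$, invoke Lemma \ref{Lemma important} for each $l$, then pass to the limit using the universal limit theorem on the RDE side and continuous dependence of ODE solutions (which the paper isolates as Lemma \ref{Lemma convergence of solution of ode}, citing Thm 3.15 of Friz--Victoir extended to Banach spaces) for the auxiliary equation. The only cosmetic differences are the citation for convergence of the higher-level signatures (you use Thm 3.7 of \cite{Lyonsnotes}, the paper quotes Thm 3.1.3 of \cite{LyonsQian}) and that you appeal to standard ODE continuous-dependence instead of the paper's dedicated lemma; both are sound.
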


The proof of Theorem \ref{Theorem rough Euler expansion of solution of RDE}\
is on page \pageref{Proof of Theorem rough Euler expansion of solution of
RDE}.

\begin{remark}
Suppose $f\in L\left( \mathcal{V},C^{\gamma }\left( \mathcal{U},\mathcal{U}%
\right) \right) $ for $\gamma >1$, $X:\left[ 0,T\right] \rightarrow G^{\left[
p\right] }\left( \mathcal{V}\right) $ is a weak geometric $p$-rough path%
\footnote{%
Continuous path $X:\left[ 0,T\right] \rightarrow G^{\left[ p\right] }\left( 
\mathcal{V}\right) $ is called a weak geometric $p$-rough path, if $%
\left\Vert X\right\Vert _{p-var,\left[ 0,T\right] }<\infty $.} for some $%
p\in \lbrack 1,\gamma +1)$, and $\xi \in \mathcal{U}$. Then by following
similar reasoning as that of Theorem \ref{Theorem rough Euler expansion of
solution of RDE}, one can prove that, any solution, in the sense of Def
10.17 in \cite{Friz and Victoir book}, to the rough differential equation 
\begin{equation*}
dy=f\left( y\right) dX\text{, }y_{0}=\xi \text{,}
\end{equation*}%
satisfies the estimates in Theorem \ref{Theorem rough Euler expansion of
solution of RDE}.
\end{remark}

The theorem below follows from Thm 4.12 \cite{Lyonsnotes} and Lemma \ref%
{Lemma important}.

\begin{theorem}
\label{Theorem Euler expansion of rough integral}Suppose $f\in L\left( 
\mathcal{V},C^{\gamma }\left( \mathcal{V},\mathcal{U}\right) \right) $ for $%
\gamma >1$, and $X$ is a geometric $p$-rough path for some $p\in \lbrack
1,\gamma +1)$. Denote $Y:\left[ 0,T\right] \rightarrow G^{\left[ p\right]
}\left( \mathcal{U}\right) $ as the rough integral (in the sense of Def 4.9 
\cite{Lyonsnotes}) 
\begin{equation*}
Y_{t}=\int_{0}^{t}f\left( X\right) dX\text{, }t\in \left[ 0,T\right] \text{.}
\end{equation*}%
Then there exists constant $C_{p,\gamma }$, which only depends on $p$ and $%
\gamma $ and is finite whenever $\gamma >p-1$, such that, for any $0\leq
s\leq t\leq T$, if denote $y^{s,t}:\left[ 0,T\right] \rightarrow \mathcal{U}$
as the unique solution of the ordinary differential equation: 
\begin{eqnarray*}
dy_{u}^{s,t} &=&\left( \sum_{k=1}^{\lfloor \gamma \rfloor }\left(
D^{k-1}f\right) \pi _{k}\left( \log _{\lfloor \gamma \rfloor +1}\left(
S_{\lfloor \gamma \rfloor +1}\left( X\right) _{s,t}\right) \right) \left(
y_{u}^{s,t}\right) \right) du\text{, }u\in \left[ 0,1\right] \text{, } \\
y_{0}^{s,t} &=&\pi _{1}\left( Y_{s}\right) +\left( D^{\lfloor \gamma \rfloor
}f\right) \pi _{\lfloor \gamma \rfloor +1}\left( \log _{\lfloor \gamma
\rfloor +1}\left( S_{\lfloor \gamma \rfloor +1}\left( X\right) _{s,t}\right)
\right) \left( \pi _{1}\left( Y_{s}\right) \right) \text{,}
\end{eqnarray*}%
then we have%
\begin{gather*}
\left( 1\right) ,\left\Vert \pi _{1}\left( Y_{t}\right)
-y_{1}^{s,t}\right\Vert \leq C_{p,\gamma }\left\vert f\right\vert
_{Lip\left( \gamma \right) }^{\gamma +1}\left\Vert X\right\Vert _{p-var,
\left[ s,t\right] }^{\gamma +1}\text{,} \\
\left( 2\right) ,\left\Vert \pi _{1}\left( Y_{s,t}\right)
-\sum_{k=1}^{\lfloor \gamma \rfloor +1}\left( D^{k-1}f\right) \pi _{k}\left(
S_{\lfloor \gamma \rfloor +1}\left( X\right) _{s,t}\right) \left( \pi
_{1}\left( Y_{s}\right) \right) \right\Vert \leq C_{p,\gamma }\left\vert
f\right\vert _{Lip\left( \gamma \right) }^{\gamma +1}\left\Vert X\right\Vert
_{p-var,\left[ s,t\right] }^{\gamma +1}\text{.}
\end{gather*}
\end{theorem}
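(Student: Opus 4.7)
The plan is to realize the rough integral $Y$ as the $\mathcal{U}$-component of the solution of a ``triangular'' ODE in the augmented space $\mathcal{V}\oplus\mathcal{U}$, apply Lemma~\ref{Lemma important} in that enlarged setting, and then transfer the estimates to the geometric $p$-rough setting via Theorem~4.12 of \cite{Lyonsnotes}.

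Using the canonical identification $L(\mathcal{V}, C^{\gamma}(\mathcal{V}, \mathcal{U})) \cong C^{\gamma}(\mathcal{V}, L(\mathcal{V}, \mathcal{U}))$, I define the augmented vector field $g \in L\bigl(\mathcal{V}, C^{\gamma}(\mathcal{V}\oplus\mathcal{U}, \mathcal{V}\oplus\mathcal{U})\bigr)$ by
\[
g(w)(v, u) := \bigl(w,\, f(w)(v)\bigr).
\]
Its $Lip(\gamma)$-norm is controlled by $\left\vert f\right\vert_{Lip(\gamma)}$ up to an absolute constant, and for any continuous bounded-variation driver $x:[0,T]\to\mathcal{V}$ the unique ODE solution $z$ of $dz = g(z) dx$ with $z_{0} = (x_0, 0)$ satisfies $z_t = \bigl(x_t,\int_0^t f(x_u) dx_u\bigr)$. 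Hence the $\mathcal{U}$-component of $z$ is the classical integral whose rough enhancement is $Y$.

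Applying Lemma~\ref{Lemma important} to $g$ and $x$ yields both Euler error bounds for $z$ in terms of $g^{\circ k}(\cdot)(I_d)$ evaluated at the various levels of the log-signature of $x$. The crucial algebraic observation, proved by induction on $k$, is the ``triangular'' identity
\[
\pi_{\mathcal{U}}\bigl(g^{\circ k}(v_1\otimes\cdots\otimes v_k)(I_d)(v, u)\bigr) = (D^{k-1}f)(v_1\otimes\cdots\otimes v_k)(v),
\]
which holds because the $\mathcal{V}$-component of $g(w)(v,u)$ is the constant $w$, so each additional composition step on the left can only act by state-differentiation on the $\mathcal{U}$-component, producing precisely the Fr\'{e}chet derivatives $D^{k-1}f$. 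Substituting this identity into the output of Lemma~\ref{Lemma important} shows that $\pi_{\mathcal{U}}(z^{s,t})$ is governed by exactly the ODE written in the theorem (matching the $\lfloor\gamma\rfloor + 1$ initial correction after projection), and the estimate of Lemma~\ref{Lemma important} projects to both claimed inequalities for the bounded-variation driver $x$.

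To pass to the rough setting, select a sequence of continuous bounded-variation paths $(x^n)$ with $d_p(S_{[p]}(x^n), X)\to 0$. Theorem~4.12 of \cite{Lyonsnotes} ensures that the enhanced integrals $S_{[p]}(\int f(x^n) dx^n)$ converge in $d_p$ to $Y$, and, after enhancing to higher level via Theorem~3.7 of \cite{Lyonsnotes}, that the tensors $S_{\lfloor\gamma\rfloor+1}(x^n)_{s,t}$ converge to $S_{\lfloor\gamma\rfloor+1}(X)_{s,t}$. Since the approximating ODE $y^{s,t}$ depends continuously on this finite-dimensional log-signature datum, the bounded-variation estimates pass to the limit using lower semi-continuity of the $p$-variation. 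The main obstacle will be the inductive verification of the triangular identity: this is what converts the composition-style expansion of Lemma~\ref{Lemma important} (written in terms of $f^{\circ k}$) into the derivative-style expansion of the theorem (written in terms of $D^{k-1} f$), and the subsequent analytic steps parallel the proof of Theorem~\ref{Theorem rough Euler expansion of solution of RDE}.
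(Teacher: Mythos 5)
Your proposal is correct and follows essentially the same strategy as the paper: the paper itself proves the theorem by estimating the augmented rough differential equation $d(X,Y) = (1_{\mathcal{V}}, f(X))\,dX$ with $(X,Y)_0 = (X_0, 0)$ — identical to your vector field $g(w)(v,u) = (w, f(w)(v))$ — applying Lemma~\ref{Lemma important}, and passing to the rough limit via Thm~4.12 of \cite{Lyonsnotes} instead of the universal limit theorem. You usefully make explicit the ``triangular'' projection identity $\pi_{\mathcal{U}}\bigl(g^{\circ k}(\cdots)(I_d)\bigr) = D^{k-1}f(\cdots)$ that converts the composition operators $f^{\circ k}$ of Lemma~\ref{Lemma important} into the Fr\'{e}chet derivatives $D^{k-1}f$ appearing in the theorem, a step the paper leaves implicit by simply saying the proof is ``almost the same'' as that of Theorem~\ref{Theorem rough Euler expansion of solution of RDE}.
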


The proof of Theorem \ref{Theorem Euler expansion of rough integral} is
almost the same as that of Theorem \ref{Theorem rough Euler expansion of
solution of RDE}, only that we estimate the rough differential equation 
\begin{equation*}
d\left( X,Y\right) =\left( 1_{\mathcal{V}},f\left( X\right) \right) dX\text{%
, \ }\left( X,Y\right) _{0}=\left( X_{0},0\right) \in \mathcal{V}\oplus 
\mathcal{U}\text{,}
\end{equation*}
and uses Thm 4.12 \cite{Lyonsnotes} instead of universal limit theorem.

\section{Proof}

We made explicit the dependence of constants (e.g. $C_{p,\gamma }$), but the
exact value of constants may change from line to line. We denote $\mathcal{U}
$ and $\mathcal{V}$ as two Banach spaces. Recall $C^{\gamma }\left( \mathcal{%
U},\mathcal{U}\right) $ in Definition \ref{Definition Cgamma} on page %
\pageref{Definition Cgamma}.

\begin{lemma}
\label{Lemma exchange of Id and dot}Suppose $k\geq 1$ is an integer, and $%
f\in L\left( \mathcal{V},C^{k-1}\left( \mathcal{U},\mathcal{U}\right)
\right) $. Recall $f^{\circ k}\in L\left( \mathcal{V}^{\otimes k},\left( 
\mathcal{D}^{k}\left( \mathcal{U}\right) ,\left\vert \cdot \right\vert
_{k}\right) \right) $ defined in Definition \ref{Definition of f circ k} (on
page \pageref{Definition of f circ k}). Then for any $v\in \left[ \mathcal{V}%
\right] ^{k}$ (denoted at $\left( \ref{Notation of kth order Lie bracket of
V}\right) $ on page \pageref{Notation of kth order Lie bracket of V}), $%
f^{\circ k}\left( v\right) $ is a first order differential operator, which
satisfies (with $I_{d}:\mathcal{U}\rightarrow \mathcal{U}$ denotes the
identity function)%
\begin{equation*}
f^{\circ k}\left( v\right) \left( r\right) =\left( Dr\right) \left( f^{\circ
k}\left( v\right) \left( I_{d}\right) \right) \text{, \ }\forall r\in
C^{1}\left( \mathcal{U},\mathcal{U}\right) \text{.}
\end{equation*}
\end{lemma}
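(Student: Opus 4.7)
The proof proceeds by induction on $k$. The base case $k=1$ is immediate from Definition \ref{Definition of f circ k}, since $[\mathcal{V}]^{1}=\mathcal{V}$ gives $f^{\circ 1}(v)(r)(u)=(Dr)(f(v)(u))(u)$, and the identity $f(v)(I_{d})=f(v)$ (because $DI_{d}$ is the identity map) yields the claimed equality directly.

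For the inductive step, assume the statement at order $k$ and let $v\in \lbrack \mathcal{V}]^{k+1}=[\mathcal{V},[\mathcal{V}]^{k}]$. By Definition \ref{Definition of tensor space and Lie bracket space}, $v$ is a limit in $\mathcal{V}^{\otimes (k+1)}$ of finite sums $\sum_{i}(u_{i}\otimes w_{i}-w_{i}\otimes u_{i})$ with $u_{i}\in \mathcal{V}$ and $w_{i}\in \lbrack \mathcal{V}]^{k}$. Since $f^{\circ (k+1)}$ is continuous and linear, and the identity $f^{\circ (k+1)}(u\otimes w)=f(u)\circ f^{\circ k}(w)$ holds first on pure tensors (by Definition \ref{Definition of f circ k}) and then for all $w\in \mathcal{V}^{\otimes k}$ by continuity in $w$, each summand reduces to a commutator
\[
f^{\circ (k+1)}(u_{i}\otimes w_{i}-w_{i}\otimes u_{i})=f(u_{i})\circ f^{\circ k}(w_{i})-f^{\circ k}(w_{i})\circ f(u_{i}).
\]
By the inductive hypothesis applied to $w_{i}\in \lbrack \mathcal{V}]^{k}$ and the base case applied to $u_{i}\in \mathcal{V}$, both $f^{\circ k}(w_{i})$ and $f(u_{i})$ are first-order operators with vector fields $b_{i}:=f^{\circ k}(w_{i})(I_{d})$ and $a_{i}:=f(u_{i})=f(u_{i})(I_{d})$ respectively. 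The heart of the argument is then the classical fact that the commutator of two first-order differential operators is first-order: expanding $A\circ B$ for $A(r)(u)=Dr(u)\cdot a(u)$ and $B(r)(u)=Dr(u)\cdot b(u)$ via the product rule yields the second-order term $D^{2}r(u)(a(u),b(u))$ plus the first-order term $Dr(u)\cdot Db(u)\cdot a(u)$, and Schwarz symmetry $D^{2}r(u)(a,b)=D^{2}r(u)(b,a)$ forces the second-order contributions to cancel in $A\circ B-B\circ A$, leaving a first-order operator whose vector field is $Db\cdot a-Da\cdot b$; evaluating at $r=I_{d}$ identifies this vector field with the commutator applied to $I_{d}$. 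Summing over $i$, continuity of $f^{\circ (k+1)}$ in the $|\cdot |_{k+1}$ norm together with continuity of the evaluation-at-$I_{d}$ map lets us pass to the $\mathcal{V}^{\otimes (k+1)}$-limit while preserving the first-order form for $v$.

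The principal obstacle is the regularity bookkeeping. The Schwarz-theorem step requires $r\in C^{2}$, yet the lemma is stated for all $r\in C^{1}$; this is reconciled by noting that once the first-order form $(Dr)(g)$ with $g=f^{\circ (k+1)}(v)(I_{d})$ has been established on sufficiently smooth $r$, the right-hand side involves only $Dr$ and therefore extends to arbitrary $r\in C^{1}$ by the formula itself. A related concern is to verify that the vector fields $a_{i}$ and $b_{i}$ arising in the inductive step are differentiable enough for the product-rule expansion to be valid in a Banach space; this is guaranteed by the hypothesis $f\in L(\mathcal{V},C^{k-1}(\mathcal{U},\mathcal{U}))$, which at each induction level supplies exactly the regularity needed to form and differentiate the compositions appearing in the commutator.
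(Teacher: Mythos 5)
Your proof is correct, and it rests on exactly the same algebraic heart as the paper's: the commutator of two first-order differential operators is again first order because the Schwarz symmetry of the second Fr\'echet derivative kills the $D^{2}r$ terms. The organization, however, is noticeably cleaner. The paper introduces an auxiliary operator $\left[ f\right] ^{\circ k}$ (built as an iterated bracket of first-order operators, hence first order by construction), a map $\sigma $ from right-nested brackets to tensors, and then establishes $f^{\circ k}\left( v^{k}\right) =\left[ f\right] ^{\circ k}\left( \sigma \left( v^{k}\right) \right) $ by induction on the nesting depth before closing up via density of right-nested brackets in $\left[ \mathcal{V}\right] ^{k}$. You instead induct on $k$ directly, use that $\left[ \mathcal{V}\right] ^{k+1}$ is \emph{by definition} the closure of finite sums $\sum_{i}\left( u_{i}\otimes w_{i}-w_{i}\otimes u_{i}\right) $ with $w_{i}\in \left[ \mathcal{V}\right] ^{k}$ already generic, and apply the inductive hypothesis to each $w_{i}$. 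This avoids the auxiliary $\left[ f\right] ^{\circ k}$, the map $\sigma $, and the (true, but not immediate) density of right-nested brackets; the closure argument you need is exactly the one baked into Definition \ref{Definition of tensor space and Lie bracket space}. You also correctly track the regularity: at level $k+1$ the hypothesis $f\in L\left( \mathcal{V},C^{k}\left( \mathcal{U},\mathcal{U}\right) \right) $ ensures $b_{i}=f^{\circ k}\left( w_{i}\right) \left( I_{d}\right) \in C^{1}$, so $Db_{i}$ exists and the product-rule expansion is valid, and this hypothesis is stronger than the one required by the inductive hypothesis at level $k$.

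One point worth tightening is the extension from smooth $r$ to $r\in C^{1}$. Your phrasing (``the right-hand side involves only $Dr$ and therefore extends to arbitrary $r\in C^{1}$ by the formula itself'') states the conclusion rather than justifies it: a priori $f^{\circ \left( k+1\right) }\left( v\right) $ is an element of $\mathcal{D}^{k+1}\left( \mathcal{U}\right) $ defined on $C^{k+1}$ with some coefficients $\left( p_{j}\right) _{j=0}^{k+1}$, and what must be argued is that $p_{0}=0$ and $p_{j}=0$ for $j\geq 2$ so that the operator genuinely lies in $\mathcal{D}^{1}\left( \mathcal{U}\right) $ and thus acts on all of $C^{1}$. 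The way to see this is to note that your identity $\sum_{j}\left( D^{j}r\right) \left( p_{j}\right) =\left( Dr\right) \left( q\right) $ (with $q$ the commutator vector field) holds for all $r\in C^{k+1}$, and then to test against polynomial maps to isolate each $p_{j}$ --- which is precisely the ``comparing coefficients'' step the paper invokes, equally tersely. This is a repairable gloss, not a gap, but it deserves a sentence since the whole conclusion of the lemma is that one \emph{may} feed a mere $C^{1}$ function into $f^{\circ k}\left( v\right) $.
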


\begin{proof}
To prove that $f^{\circ k}\left( v\right) $ is a first order differential
operator, we define another first order differential operator, and want to
prove that they are the same.

Suppose $\mathcal{V}^{1}$ and $\mathcal{V}^{2}$ are two Banach spaces, and $%
f^{i}\in L\left( \mathcal{V}^{i},\left( \mathcal{D}^{k_{i}}\left( \mathcal{U}%
\right) ,\left\vert \cdot \right\vert _{k_{i}}\right) \right) $, $i=1,2$.
Define $\left[ f^{1},f^{2}\right] \in L\left( \mathcal{V}^{1}\otimes 
\mathcal{V}^{2},\left( \mathcal{D}^{1}\left( \mathcal{U}\right) ,\left\vert
\cdot \right\vert _{1}\right) \right) $ as the unique continuous linear
operator, which satisfies that, for any $v^{1}\in \mathcal{V}^{1}$, any $%
v^{2}\in \mathcal{V}^{2}$, and any $r\in C^{1}\left( \mathcal{U},\mathcal{U}%
\right) $,%
\begin{equation}
\left[ f^{1},f^{2}\right] \left( v^{1}\otimes v^{2}\right) \left( r\right)
=\left( Dr\right) \left( f^{1}\left( v^{1}\right) \circ f^{2}\left(
v^{2}\right) -f^{2}\left( v^{2}\right) \circ f^{1}\left( v^{1}\right)
\right) \left( I_{d}\right) \text{.}
\label{inner definition of bracket of differential operator}
\end{equation}%
For integer $k\geq 1$ and $f\in L\left( \mathcal{V},C^{k-1}\left( \mathcal{U}%
,\mathcal{U}\right) \right) $, define $\left[ f\right] ^{\circ k}\in L\left( 
\mathcal{V}^{\otimes k},\left( \mathcal{D}^{1}\left( \mathcal{U}\right)
,\left\vert \cdot \right\vert _{1}\right) \right) $ as (with $f^{\circ 1}$
defined in Definition \ref{Definition of f circ k})%
\begin{equation}
\left[ f\right] ^{\circ 1}\left( v\right) :=f^{\circ 1}\left( v\right) \text{%
, }\forall v\in \mathcal{V}\text{, and }\left[ f\right] ^{\circ k}:=\left[
f^{\circ 1},\left[ f\right] ^{\circ \left( k-1\right) }\right] \text{ for }%
k\geq 2\text{.}  \label{inner Lie bracket of f with itself}
\end{equation}%
Then by definition, for any $k\geq 1$ and any $v\in \mathcal{V}^{\otimes k}$%
, $\left[ f\right] ^{\circ k}\left( v\right) $ is a first order differential
operator.

If we can prove that $f^{\circ k}\left( v^{k}\right) $ is a first order
differential operator for any $v^{k}$ in the form%
\begin{equation}
v^{k}=\left\{ 
\begin{array}{cc}
v_{1}, & \text{if }k=1 \\ 
\left[ v_{1},\dots ,\left[ v_{k-1},v_{k}\right] \right] , & \text{if }k\geq 2%
\end{array}%
\right. \text{, \ with }\left\{ v_{j}\right\} _{j=1}^{k}\subset \mathcal{V}%
\text{,}  \label{inner definition of v}
\end{equation}%
then since any $v\in \left[ \mathcal{V}\right] ^{k}$ can be approximated by
linear combinations of $v^{k}$ in the form of $\left( \ref{inner definition
of v}\right) $, by using that $f^{\circ k}:\mathcal{V}^{\otimes
k}\rightarrow \left( \mathcal{D}^{k}\left( \mathcal{U}\right) ,\left\vert
\cdot \right\vert _{k}\right) $ is linear and continuous (Definition \ref%
{Definition of f circ k}), and that $\left[ \mathcal{V}\right] ^{k}$ is a
closed subspace of $\mathcal{V}^{\otimes k}$, we can prove that $f^{\circ
k}\left( v\right) $ is a first order differential operator for any $v\in %
\left[ \mathcal{V}\right] ^{k}$.

We define linear map $\sigma :\left[ \mathcal{V}\right] ^{k}\rightarrow 
\mathcal{V}^{\otimes k}$ by assigning%
\begin{eqnarray}
\sigma \left( v_{1}\right)  &:&=v_{1}\text{, }\forall v_{1}\in \mathcal{V}%
\text{, if }k=1\text{,}  \label{inner definition of sigma v} \\
\sigma \left( \left[ v_{1},\dots ,\left[ v_{k-1},v_{k}\right] \right]
\right)  &:&=v_{1}\otimes \cdots \otimes v_{k-1}\otimes v_{k}\text{, }%
\forall \left\{ v_{j}\right\} _{j=1}^{k}\subset \mathcal{V}\text{, \ if }%
k\geq 2\text{.}  \notag
\end{eqnarray}%
For any $v^{k}$ in the form of $\left( \ref{inner definition of v}\right) $
with $\sigma $ defined at $\left( \ref{inner definition of sigma v}\right) $%
, we want to prove 
\begin{equation}
f^{\circ k}\left( v^{k}\right) \left( r\right) =\left[ f\right] ^{\circ
k}\left( \sigma \left( v^{k}\right) \right) \left( r\right) \text{, \ }%
\forall r\in C^{1}\left( \mathcal{U},\mathcal{U}\right) \text{.}
\label{inner relation 1}
\end{equation}%
By definition, $\left[ f\right] ^{\circ k}\left( \sigma \left( v^{k}\right)
\right) $ is a first order differential operator, and%
\begin{equation*}
\left[ f\right] ^{\circ k}\left( \sigma \left( v^{k}\right) \right) \left(
r\right) =\left( Dr\right) \left( \left[ f\right] ^{\circ k}\left( \sigma
\left( v^{k}\right) \right) \left( I_{d}\right) \right) \text{, \ }\forall
r\in C^{1}\left( \mathcal{U},\mathcal{U}\right) \text{.}
\end{equation*}%
If we can prove $\left( \ref{inner relation 1}\right) $, then 
\begin{equation*}
f^{\circ k}\left( v^{k}\right) \left( r\right) =\left( Dr\right) \left( 
\left[ f\right] ^{\circ k}\left( \sigma \left( v^{k}\right) \right) \left(
I_{d}\right) \right) =\left( Dr\right) \left( f^{\circ k}\left( v^{k}\right)
\left( I_{d}\right) \right) \text{, \ }\forall r\in C^{1}\left( \mathcal{U},%
\mathcal{U}\right) \text{.}
\end{equation*}%
Thus, in the following, we concentrate on proving $\left( \ref{inner
relation 1}\right) $.

It is clear that, $\left( \ref{inner relation 1}\right) $ is true when $k=1$%
. Indeed, for any $v^{1}\in \mathcal{V}$, since $\left[ f\right] ^{\circ
1}\left( v^{1}\right) :=f^{\circ 1}\left( v^{1}\right) $ (see $\left( \ref%
{inner Lie bracket of f with itself}\right) $) and $\sigma \left(
v^{1}\right) :=v^{1}$ (see $\left( \ref{inner definition of sigma v}\right) $%
), we have 
\begin{equation*}
\left[ f\right] ^{\circ 1}\left( v^{1}\right) =f^{\circ 1}\left(
v^{1}\right) =f^{\circ 1}\left( \sigma \left( v^{1}\right) \right) \text{, }%
\forall v^{1}\in \mathcal{V}\text{.}
\end{equation*}%
Then we use mathematical induction. Suppose that for integer $K\geq 1$, $%
k=1,2,\dots ,K$ and any $v^{k}$ in the form of $\left( \ref{inner definition
of v}\right) $, we have%
\begin{equation}
f^{\circ k}\left( v^{k}\right) =\left[ f\right] ^{\circ k}\left( \sigma
\left( v^{k}\right) \right) \text{.}  \label{inner inductive hypothesis}
\end{equation}%
We want prove that, for any $v_{0}\in \mathcal{V}$, and any $v^{K}$ in the
form of $\left( \ref{inner definition of v}\right) $,%
\begin{equation*}
f^{\circ \left( K+1\right) }\left( \left[ v_{0},v^{K}\right] \right) =\left[
f\right] ^{\circ \left( K+1\right) }\left( v_{0}\otimes \sigma \left(
v^{K}\right) \right) \text{.}
\end{equation*}

Based on definition $\left( \ref{inner definition of bracket of differential
operator}\right) $ and $\left( \ref{inner Lie bracket of f with itself}%
\right) $, we have, for any $r\in C^{1}\left( \mathcal{U},\mathcal{U}\right) 
$,%
\begin{eqnarray}
\left[ f\right] ^{\circ \left( K+1\right) }\left( v_{0}\otimes \sigma \left(
v^{K}\right) \right) \left( r\right)  &=&\left[ f^{\circ 1}\left(
v_{0}\right) ,\left[ f\right] ^{\circ K}\left( \sigma \left( v^{K}\right)
\right) \right] \left( r\right)   \label{inner 1} \\
&=&\left( Dr\right) \left( f^{\circ 1}\left( v_{0}\right) \circ \left[ f%
\right] ^{\circ K}\left( \sigma \left( v^{K}\right) \right) -\left[ f\right]
^{\circ K}\left( \sigma \left( v^{K}\right) \right) \circ f^{\circ 1}\left(
v_{0}\right) \right) \left( I_{d}\right) \text{.}  \notag
\end{eqnarray}%
If we assume in addition that $r\in C^{2}\left( \mathcal{U},\mathcal{U}%
\right) $, then by using that $f^{\circ 1}\left( v_{0}\right) $ and $\left[ f%
\right] ^{\circ K}\left( \sigma \left( v^{K}\right) \right) $ are first
order differential operators, we have 
\begin{eqnarray}
&&\left( Dr\right) \left( f^{\circ 1}\left( v_{0}\right) \circ \left[ f%
\right] ^{\circ K}\left( \sigma \left( v^{K}\right) \right) \right) \left(
I_{d}\right)   \label{inner 2} \\
&=&\left( f^{\circ 1}\left( v_{0}\right) \circ \left[ f\right] ^{\circ
K}\left( \sigma \left( v^{K}\right) \right) \right) \left( r\right) -\left(
D^{2}r\right) \left( \left[ f\right] ^{\circ K}\left( \sigma \left(
v^{K}\right) \right) \left( I_{d}\right) \right) \left( f^{\circ 1}\left(
v_{0}\right) \left( I_{d}\right) \right) \text{,}  \notag
\end{eqnarray}%
and%
\begin{eqnarray}
&&\left( Dr\right) \left( \left[ f\right] ^{\circ K}\left( \sigma \left(
v^{K}\right) \right) \circ f^{\circ 1}\left( v_{0}\right) \right) \left(
I_{d}\right)   \label{inner 3} \\
&=&\left( \left[ f\right] ^{\circ K}\left( \sigma \left( v^{K}\right)
\right) \circ f^{\circ 1}\left( v_{0}\right) \right) \left( r\right) -\left(
D^{2}r\right) \left( f^{\circ 1}\left( v_{0}\right) \left( I_{d}\right)
\right) \left( \left[ f\right] ^{\circ K}\left( \sigma \left( v^{K}\right)
\right) \left( I_{d}\right) \right) \text{.}  \notag
\end{eqnarray}%
Using inductive hypothesis $\left( \ref{inner inductive hypothesis}\right) $
and the definition of $f^{\circ \left( K+1\right) }$ in Definition \ref%
{Definition of f circ k}, we have%
\begin{eqnarray}
&&\left( f^{\circ 1}\left( v_{0}\right) \circ \left[ f\right] ^{\circ
K}\left( \sigma \left( v^{K}\right) \right) \right) -\left( \left[ f\right]
^{\circ K}\left( \sigma \left( v^{K}\right) \right) \circ f^{\circ 1}\left(
v_{0}\right) \right)   \label{inner 4} \\
&=&f^{\circ 1}\left( v_{0}\right) \circ f^{\circ K}\left( v^{K}\right)
-f^{\circ K}\left( v^{K}\right) \circ f^{\circ 1}\left( v_{0}\right)
=f^{\circ \left( K+1\right) }\left( v_{0}\otimes v^{K}-v^{K}\otimes
v_{0}\right) =f^{\circ \left( K+1\right) }\left( \left[ v_{0},v^{K}\right]
\right) \text{.}  \notag
\end{eqnarray}%
Since our differentiability is in Fr\'{e}chet's sense and $r\in C^{2}\left( 
\mathcal{U},\mathcal{U}\right) $, we have 
\begin{equation}
\left( D^{2}r\right) \left( u_{1}\otimes u_{2}\right) =\left( D^{2}r\right)
\left( u_{2}\otimes u_{1}\right) ,\forall u_{1}\in \mathcal{U},\forall
u_{2}\in \mathcal{U}.  \label{inner 5}
\end{equation}%
Thus, combining $\left( \ref{inner 1}\right) $, $\left( \ref{inner 2}\right) 
$, $\left( \ref{inner 3}\right) $, $\left( \ref{inner 4}\right) $ and $%
\left( \ref{inner 5}\right) $, we have, 
\begin{equation}
\left[ f\right] ^{\circ \left( K+1\right) }\left( v_{0}\otimes \sigma \left(
v^{K}\right) \right) \left( r\right) =f^{\circ \left( K+1\right) }\left( %
\left[ v_{0},v^{K}\right] \right) \left( r\right) \text{, \ }\forall r\in
C^{2}\left( \mathcal{U},\mathcal{U}\right) \text{.}
\label{inner what we want to prove}
\end{equation}%
Since $\left[ f\right] ^{\circ \left( K+1\right) }\left( v_{0}\otimes \sigma
\left( v^{K}\right) \right) :=\left[ f^{\circ 1}\left( v_{0}\right) ,\left[ f%
\right] ^{\circ K}\left( \sigma \left( v^{K}\right) \right) \right] $ has
the explicit form $\left( \ref{inner definition of bracket of differential
operator}\right) $, by comparing the "coefficients" of $\left\{
D^{j}r\right\} _{j=0}^{K+1}$ for any $r\in C^{K+1}\left( \mathcal{U},%
\mathcal{U}\right) $, we get that $\left( \ref{inner what we want to prove}%
\right) $ holds for any $r\in C^{1}\left( \mathcal{U},\mathcal{U}\right) $.
\end{proof}

\begin{lemma}
\label{Lemma Euler expansion of the solution of ODE}Suppose $\mathcal{V}$
and $\mathcal{U}$ are two Banach spaces, $f\in L\left( \mathcal{V},C^{\gamma
}\left( \mathcal{U},\mathcal{U}\right) \right) $ for some $\gamma >1$ and $%
\xi \in \mathcal{U}$. Denote $\lfloor \gamma \rfloor $ as the largest
integer which is strictly less than $\gamma $. Suppose $g\in G^{\lfloor
\gamma \rfloor +1}\left( \mathcal{V}\right) $. Then, there exists constant $%
C_{\gamma }$, which only depends on $\gamma $, such that, the unique
solution of the ordinary differential equation%
\begin{eqnarray}
dy_{u} &=&\sum_{k=1}^{\lfloor \gamma \rfloor }f^{\circ k}\pi _{k}\left( \log
_{\lfloor \gamma \rfloor +1}g\right) \left( I_{d}\right) \left( y_{u}\right)
du\text{, }u\in \left[ 0,1\right] \text{,}  \label{first ode} \\
\text{ }y_{0} &=&\xi +f^{\circ \left( \lfloor \gamma \rfloor +1\right) }\pi
_{\lfloor \gamma \rfloor +1}\left( \log _{\lfloor \gamma \rfloor +1}\left(
g\right) \right) \left( I_{d}\right) \left( \xi \right) \text{,}  \notag
\end{eqnarray}%
satisfies%
\begin{equation*}
\left\Vert y_{1}-\xi -\sum_{k=1}^{\lfloor \gamma \rfloor +1}f^{\circ k}\pi
_{k}\left( g\right) \left( I_{d}\right) \left( \xi \right) \right\Vert \leq
C_{\gamma }\left( \left\vert f\right\vert _{Lip\left( \gamma \right)
}\left\Vert g\right\Vert \right) ^{\gamma +1}\text{.}
\end{equation*}
\end{lemma}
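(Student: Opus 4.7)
The plan is to Taylor-expand $y_{1}$ in the time variable $u$ and match the expansion, via the identity $\exp_{\lfloor \gamma \rfloor +1}\log_{\lfloor \gamma \rfloor +1}g=g$ in $G^{\lfloor \gamma \rfloor +1}(\mathcal{V})$, against the target $\xi +\sum_{k=1}^{\lfloor \gamma \rfloor +1}f^{\circ k}\pi_{k}(g)(I_{d})(\xi )$, so that all residual contributions are of order $(|f|_{Lip(\gamma )}\|g\|)^{\gamma +1}$.

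First, since $\pi_{k}(\log_{\lfloor \gamma \rfloor +1}g)\in \lbrack \mathcal{V}]^{k}$, Lemma~\ref{Lemma exchange of Id and dot} makes each $f^{\circ k}\pi_{k}(\log g)(I_{d})$ a Lipschitz vector field on $\mathcal{U}$, so $(\ref{first ode})$ has a unique $C^{1}$ solution $y:[0,1]\rightarrow \mathcal{U}$. Writing $V:=\sum_{k=1}^{\lfloor \gamma \rfloor }f^{\circ k}\pi_{k}(\log g)$ as a first-order differential operator, the chain-rule identity $\tfrac{d^{j}}{du^{j}}r(y_{u})=V^{\circ j}(r)(y_{u})$ gives, upon Taylor expanding in $u$ around $u=0$ to order $N:=\lfloor \gamma \rfloor +1$,
\begin{equation*}
y_{1}\;=\;y_{0}+\sum_{j=1}^{N}\tfrac{1}{j!}V^{\circ j}(I_{d})(y_{0})+R_{N},
\end{equation*}
with $R_{N}$ an integral remainder. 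I will also use the homogeneous-norm bound $\|\pi_{k}(\log_{\lfloor \gamma \rfloor +1}g)\|\leq C_{\gamma }\|g\|^{k}$, obtained by expanding the truncated logarithm and invoking the definition of the homogeneous norm on $G^{\lfloor \gamma \rfloor +1}(\mathcal{V})$.

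The central algebraic step is to expand each $V^{\circ j}$ by multilinearity and apply $f^{\circ k_{1}}(w_{1})\circ \cdots \circ f^{\circ k_{j}}(w_{j})=f^{\circ (k_{1}+\cdots +k_{j})}(w_{1}\otimes \cdots \otimes w_{j})$ (immediate from the definition of $f^{\circ \cdot }$ on simple tensors together with bilinearity of composition), then regroup by the total degree $K=k_{1}+\cdots +k_{j}$. Each fixed-$K$ block picks out $\pi_{K}$ of the formal series $\sum_{j\geq 0}\tfrac{1}{j!}(\log_{\lfloor \gamma \rfloor +1}g)^{\otimes j}=g$: for $K\leq \lfloor \gamma \rfloor $ the constraint $k_{i}\leq \lfloor \gamma \rfloor $ is automatic and the sum equals $\pi_{K}(g)$; for $K=\lfloor \gamma \rfloor +1$ only the $j=1$ monomial $\pi_{\lfloor \gamma \rfloor +1}(\log g)$ is excluded, so the sum equals $\pi_{\lfloor \gamma \rfloor +1}(g)-\pi_{\lfloor \gamma \rfloor +1}(\log g)$; all contributions of total degree $K\geq \lfloor \gamma \rfloor +2$ are grouped with $R_{N}$ into an error $\mathcal{E}$. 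Substituting $y_{0}=\xi +f^{\circ (\lfloor \gamma \rfloor +1)}\pi_{\lfloor \gamma \rfloor +1}(\log g)(I_{d})(\xi )$, the extra $\pi_{\lfloor \gamma \rfloor +1}(\log g)$ piece carried by $y_{0}-\xi $ telescopes exactly with the $K=\lfloor \gamma \rfloor +1$ deficit to reconstruct $\pi_{\lfloor \gamma \rfloor +1}(g)$, and the residual differences $f^{\circ K}\pi_{K}(g)(I_{d})(y_{0})-f^{\circ K}\pi_{K}(g)(I_{d})(\xi )$ for $K\leq \lfloor \gamma \rfloor $ are controlled by $\|y_{0}-\xi \|\leq C|f|_{Lip(\gamma )}^{\lfloor \gamma \rfloor +1}\|g\|^{\lfloor \gamma \rfloor +1}$ and a Lipschitz estimate on $f^{\circ K}\pi_{K}(g)(I_{d})$, yielding contributions of order $(|f|_{Lip(\gamma )}\|g\|)^{K+\lfloor \gamma \rfloor +1}\leq (|f|_{Lip(\gamma )}\|g\|)^{\gamma +1}$.

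The main obstacle is bounding $\mathcal{E}$ uniformly, because a naive Taylor expansion at order $N$ would require more derivatives of $f$ than $f\in C^{\gamma }$ in general possesses. This is handled by Taylor-expanding each summand $f^{\circ k}\pi_{k}(\log g)(I_{d})$ only to the order its own smoothness $C^{\gamma -k+1}$ allows, with the final fractional power absorbed into a remainder that uses the H\"{o}lder regularity of $D^{\lfloor \gamma \rfloor }f$. Every term arising in $\mathcal{E}$ then has the shape $f^{\circ K}(w)(I_{d})$ (or such a H\"{o}lder remainder) with total degree $K\geq \lfloor \gamma \rfloor +2$ and $\|w\|\leq C_{\gamma }^{j}\|g\|^{K}$ (from the norm bound on $\pi_{k_{i}}(\log g)$); combined with the uniform estimate $\|f^{\circ K}(w)(I_{d})\|_{\infty }\leq C|f|_{Lip(\gamma )}^{K}\|w\|$ (only the first-order action of each $f^{\circ k_{i}}$ effectively survives on $I_{d}$, whose higher derivatives vanish), summation over $j$ and $K\geq \lfloor \gamma \rfloor +2$ is geometric and yields $\|\mathcal{E}\|\leq C_{\gamma }(|f|_{Lip(\gamma )}\|g\|)^{\lfloor \gamma \rfloor +2}\leq C_{\gamma }(|f|_{Lip(\gamma )}\|g\|)^{\gamma +1}$; the normalization $|f|_{Lip(\gamma )}\|g\|\leq 1$ may be assumed without loss, the opposite regime being absorbed into $C_{\gamma }$.
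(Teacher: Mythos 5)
Your proposal is correct and follows essentially the same route as the paper's proof: you expand the ODE solution order-by-order in $u$, regroup compositions $F^{k_j}\circ\cdots\circ F^{k_1}$ by total degree, use $\exp_{N}\log_{N}g=g$ to identify the degree-$\le N$ block with $\sum_{k}f^{\circ k}\pi_k(g)(I_d)$, let $y_0-\xi$ supply the missing $\pi_N(\log_N g)$ piece, and control the overflow via H\"older regularity of $D^{\lfloor\gamma\rfloor}f$ — this is exactly the paper's iterative ``subtraction and estimation'' argument in $N$ steps. The one wrinkle is your phrasing that terms in $\mathcal{E}$ ``have the shape $f^{\circ K}(w)(I_d)$ with $K\geq\lfloor\gamma\rfloor+2$'': for $f\in C^{\gamma}$ these operators are not defined, so those contributions must appear only as the H\"older remainders you mention parenthetically, which is precisely how the paper handles the $k_1+\cdots+k_j>N$ branches.
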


\begin{proof}
\label{Proof of Lemma Euler expansion of the solution of ODE}Denote $\left\{
\gamma \right\} :=\gamma -\lfloor \gamma \rfloor $, and denote $N:=\lfloor
\gamma \rfloor +1$. We assume $\left\vert f\right\vert _{Lip\left( \gamma
\right) }=1$. Otherwise, we replace $f$ by $\left\vert f\right\vert
_{Lip\left( \gamma \right) }^{-1}f$ and replace $g$ by $\delta _{\left\vert
f\right\vert _{Lip\left( \gamma \right) }}g$ (with $\delta _{\lambda
}g:=1+\sum_{k=1}^{N}\lambda ^{k}\pi _{k}\left( g\right) $). For integer $%
k=1,2,\dots ,\lfloor \gamma \rfloor +1$ and $v\in \mathcal{V}^{\otimes k}$,
based on Definition of differential operator $f^{\circ k}\left( v\right) $
in Definition \ref{Definition of f circ k} on page \pageref{Definition of f
circ k}, $f^{\circ k}\left( v\right) \left( I_{d}\right) \in C^{\gamma
-k+1}\left( \mathcal{U},\mathcal{U}\right) $ and\ 
\begin{equation}
\left\vert f^{\circ k}\left( v\right) \left( I_{d}\right) \right\vert
_{Lip\left( \gamma -k+1\right) }=\left\Vert v\right\Vert \left\vert
f\right\vert _{Lip\left( \gamma \right) }^{k}\left\vert \left( \frac{f}{%
\left\vert f\right\vert _{Lip\left( \gamma \right) }}\right) ^{\circ
k}\left( \frac{v}{\left\Vert v\right\Vert }\right) \left( I_{d}\right)
\right\vert _{Lip\left( \gamma -k+1\right) }\leq C_{\gamma }\left\Vert
v\right\Vert \left\vert f\right\vert _{Lip\left( \gamma \right) }^{k}\text{.}
\label{inner estimate of fcirck Id}
\end{equation}

When $\left\Vert g\right\Vert >1$, it can be computed that ($\left\vert
f\right\vert _{Lip\left( \gamma \right) }=1$) 
\begin{equation*}
\left\Vert \sum_{k=1}^{N-1}f^{\circ k}\left( I_{d}\right) \left( \xi \right)
\pi _{k}\left( g\right) \right\Vert \leq C_{\gamma }\left\Vert g\right\Vert
^{N-1}\text{\ and }\left\Vert y_{1}-\xi \right\Vert \leq \left\Vert
y_{0}-\xi \right\Vert +\left\Vert y\right\Vert _{1-var,\left[ 0,1\right]
}\leq C_{\gamma }\left\Vert g\right\Vert ^{N}\text{.}
\end{equation*}%
Thus, ($\left\Vert g\right\Vert >1$, $N\leq \gamma +1$)%
\begin{equation*}
\left\Vert y_{1}-\xi -\sum_{k=1}^{N}f^{\circ k}\left( I_{d}\right) \left(
\xi \right) \pi _{k}\left( g\right) \right\Vert \leq C_{\gamma }\left\Vert
g\right\Vert ^{N}\leq C_{\gamma }\left\Vert g\right\Vert ^{\gamma +1}\text{,}
\end{equation*}%
and lemma holds. In the following, we assume $\left\vert f\right\vert
_{Lip\left( \gamma \right) }=1$ and $\left\Vert g\right\Vert \leq 1$.

It is clear that, the solution $y$ of the ordinary differential equation $%
\left( \ref{first ode}\right) $ satisfies (since $\left\vert f\right\vert
_{Lip\left( \gamma \right) }=1$ and $\left\Vert g\right\Vert \leq 1$)%
\begin{equation}
\sup_{u\in \left[ 0,1\right] }\left\Vert y_{u}-\xi \right\Vert \leq
\left\Vert y_{0}-\xi \right\Vert +\left\Vert y\right\Vert _{1-var,\left[ 0,1%
\right] }\leq C_{\gamma }\left\Vert g\right\Vert \text{.}
\label{inner estimation of ODE solution}
\end{equation}

For integer $k=1,\dots ,N$, denote differential operator $F^{k}\ $as%
\begin{equation*}
F^{k}:=f^{\circ k}\pi _{k}\left( \log _{N}\left( g\right) \right) \text{.}
\end{equation*}%
Based on Lemma \ref{Lemma exchange of Id and dot}, $\left\{ F^{k}\right\}
_{k=1}^{N}$ are first order differential operators, and satisfies%
\begin{equation*}
F^{k}\left( r\right) =\left( Dr\right) F^{k}\left( I_{d}\right) \text{, \ }%
\forall r\in C^{1}\left( \mathcal{U},\mathcal{U}\right) \text{.}
\end{equation*}%
Similar as $\left( \ref{inner estimate of fcirck Id}\right) $, since we
assumed $\left\vert f\right\vert _{Lip\left( \gamma \right) }=1$, for $1\leq
k\leq N-1$, we have%
\begin{equation}
\left\vert DF^{k}\left( I_{d}\right) \right\vert _{Lip\left( \gamma
-k\right) }\leq C_{\gamma }\left\Vert g\right\Vert ^{k}\text{;}
\label{inner bound of functional of f}
\end{equation}%
for $k_{i}\geq 1$, $\sum_{i=1}^{j}k_{i}=k\leq N$, we have 
\begin{equation}
\left\vert \left( F^{k_{j}}\circ \cdots \circ F^{k_{1}}\right) \left(
I_{d}\right) \right\vert _{Lip\left( \gamma +1-k\right) }\leq C_{\gamma
}\left\Vert g\right\Vert ^{k}\text{.}
\label{inner composition of differential operators}
\end{equation}

By using the fact that $y$ satisfies $\left( \ref{first ode}\right) $, we
have%
\begin{eqnarray}
&&y_{1}-\xi -f^{\circ N}\pi _{N}\left( \log _{N}\left( g\right) \right)
\left( I_{d}\right) \left( \xi \right) -\sum_{k=1}^{N-1}F^{k}\left(
I_{d}\right) \left( \xi \right)  \label{inner ode as a whole} \\
&=&\sum_{k=1}^{N-1}\iint_{0\leq u_{1}\leq u_{2}\leq 1}\left( F^{k}\left(
I_{d}\right) \left( y_{0}\right) -F^{k}\left( I_{d}\right) \left( \xi
\right) \right) du_{1}du_{2}  \notag \\
&&+\sum_{1\leq k_{i}\leq N-1,i=1,2}\iint_{0\leq u_{1}\leq u_{2}\leq
1}DF^{k_{1}}\left( I_{d}\right) F^{k_{2}}\left( I_{d}\right) \left(
y_{u_{1}}\right) du_{1}du_{2}\text{.}  \notag
\end{eqnarray}%
Since $y_{0}=\xi +f^{\circ N}\pi _{N}\left( \log _{N}g\right) \left(
I_{d}\right) \left( \xi \right) $, by using $\left( \ref{inner composition
of differential operators}\right) $, we have ($\left\vert f\right\vert
_{Lip\left( \gamma \right) }=1$, $\left\Vert g\right\Vert \leq 1$ and $%
\gamma \leq N$)%
\begin{equation*}
\left\Vert \sum_{k=1}^{N-1}\iint_{0\leq u_{1}\leq u_{2}\leq 1}F^{k}\left(
I_{d}\right) \left( y_{0}\right) -F^{k}\left( I_{d}\right) \left( \xi
\right) du_{1}du_{2}\right\Vert \leq C_{\gamma }\left\Vert g\right\Vert
^{1+N}\leq C_{\gamma }\left\Vert g\right\Vert ^{1+\gamma }\text{.}
\end{equation*}%
When $k_{1}\geq 1$, $k_{2}\geq 1$, $k_{1}+k_{2}\leq N$, using that $%
F^{k_{2}} $ is a first order differential operator, we have $%
DF^{k_{1}}\left( I_{d}\right) F^{k_{2}}\left( I_{d}\right) =\left(
F^{k_{2}}\circ F^{k_{1}}\right) \left( I_{d}\right) $. Thus, 
\begin{equation}
\iint_{0\leq u_{1}\leq u_{2}\leq 1}DF^{k_{1}}\left( I_{d}\right)
F^{k_{2}}\left( I_{d}\right) \left( y_{u_{1}}\right)
du_{1}du_{2}=\iint_{0\leq u_{1}\leq u_{2}\leq 1}\left( F^{k_{2}}\circ
F^{k_{1}}\right) \left( I_{d}\right) \left( y_{u_{1}}\right) du_{1}du_{2}%
\text{.}  \label{inner ode part 1}
\end{equation}%
When $1\leq k_{1}\leq N-1$, $1\leq k_{2}\leq N-1$, $k_{1}+k_{2}\geq N+1$, by
combining $\left( \ref{inner bound of functional of f}\right) $ and $\left( %
\ref{inner composition of differential operators}\right) $, we get 
\begin{equation}
\left\Vert \iint_{0\leq u_{1}\leq u_{2}\leq 1}DF^{k_{2}}\left( I_{d}\right)
F^{k_{1}}\left( I_{d}\right) \left( y_{u_{1}}\right) du_{1}du_{2}\right\Vert
\leq C_{\gamma }\left\Vert g\right\Vert ^{k_{1}+k_{2}}\leq C_{\gamma
}\left\Vert g\right\Vert ^{N+1}\leq C_{\gamma }\left\Vert g\right\Vert
^{\gamma +1}\text{.}  \label{inner ode part 2}
\end{equation}%
Therefore, combining $\left( \ref{inner ode as a whole}\right) $, $\left( %
\ref{inner ode part 1}\right) $ and $\left( \ref{inner ode part 2}\right) $,
we have%
\begin{eqnarray*}
&&\left\Vert y_{1}-\xi -\sum_{k=1}^{N}F^{k}\left( I_{d}\right) \left( \xi
\right) -\sum_{1\leq k_{i}\leq N-1,k_{1}+k_{2}\leq N}\iint_{0\leq u_{1}\leq
u_{2}\leq 1}\left( F^{k_{2}}\circ F^{k_{1}}\right) \left( I_{d}\right)
\left( y_{u_{1}}\right) du_{1}du_{2}\right\Vert \\
&\leq &C_{\gamma }\left\Vert g\right\Vert ^{\gamma +1}\text{.}
\end{eqnarray*}

Then we continue to estimate%
\begin{equation*}
\sum_{1\leq k_{i}\leq N-1,k_{1}+k_{2}\leq N}\iint_{0\leq u_{1}\leq u_{2}\leq
1}\left( F^{k_{2}}\circ F^{k_{1}}\right) \left( I_{d}\right) \left(
y_{u_{1}}\right) du_{1}du_{2}\text{.}
\end{equation*}%
When $1\leq k_{1}\leq N-1$, $1\leq k_{2}\leq N-1$, $k_{1}+k_{2}=N$, by using 
$\left( \ref{inner composition of differential operators}\right) \,$\ and $%
\left( \ref{inner estimation of ODE solution}\right) $, we have%
\begin{eqnarray*}
&&\left\Vert \iint_{0\leq u_{1}\leq u_{2}\leq 1}\left( \left( F^{k_{2}}\circ
F^{k_{1}}\right) \left( I_{d}\right) \left( y_{u_{1}}\right) -\left(
F^{k_{2}}\circ F^{k_{1}}\right) \left( I_{d}\right) \left( \xi \right)
\right) du_{1}du_{2}\right\Vert \\
&\leq &C_{\gamma }\left\Vert g\right\Vert ^{N}\sup_{u\in \left[ 0,1\right]
}\left\Vert y_{u}-\xi \right\Vert ^{\left\{ \gamma \right\} }\leq C_{\gamma
}\left\Vert g\right\Vert ^{\gamma +1}\text{.}
\end{eqnarray*}%
When $k_{1}\geq 1$, $k_{2}\geq 1$ and $k_{1}+k_{2}\leq N-1$, we have%
\begin{eqnarray*}
&&\sum_{k_{i}\geq 1,k_{1}+k_{2}\leq N-1}\iint_{0\leq u_{1}\leq u_{2}\leq
1}\left( \left( F^{k_{2}}\circ F^{k_{1}}\right) \left( I_{d}\right) \left(
y_{u_{1}}\right) -\left( F^{k_{2}}\circ F^{k_{1}}\right) \left( I_{d}\right)
\left( \xi \right) \right) du_{1}du_{2} \\
&=&\sum_{k_{i}\geq 1,k_{1}+k_{2}\leq N-1}\iint_{0\leq u_{1}\leq u_{2}\leq
1}\left( \left( F^{k_{2}}\circ F^{k_{1}}\right) \left( I_{d}\right) \left(
y_{0}\right) -\left( F^{k_{2}}\circ F^{k_{1}}\right) \left( I_{d}\right)
\left( \xi \right) \right) du_{1}du_{2} \\
&&+\sum_{k_{i}\geq 1,k_{1}+k_{2}\leq N-1,k_{3}\leq
N-1}\iiint_{0<u_{1}<u_{2}<u_{3}}D\left( F^{k_{2}}\circ F^{k_{1}}\right)
\left( I_{d}\right) F^{k_{3}}\left( I_{d}\right) \left( y_{u_{1}}\right)
du_{1}du_{2}du_{3}\text{.}
\end{eqnarray*}%
Then since $y_{0}=\xi +f^{\circ N}\pi _{N}\left( \log _{N}g\right) \left(
I_{d}\right) \left( \xi \right) $, by using $\left( \ref{inner composition
of differential operators}\right) $ and $\left( \ref{inner estimation of ODE
solution}\right) $, we have%
\begin{eqnarray*}
&&\left\Vert \sum_{k_{i}\geq 1,k_{1}+k_{2}\leq N}\iint_{0\leq u_{1}\leq
u_{2}\leq 1}\left( \left( F^{k_{2}}\circ F^{k_{1}}\right) \left(
I_{d}\right) \left( y_{0}\right) -\left( F^{k_{2}}\circ F^{k_{1}}\right)
\left( I_{d}\right) \left( \xi \right) \right) du_{1}du_{2}\right\Vert \\
&\leq &C_{\gamma }\left\Vert g\right\Vert ^{2}\left\Vert y_{0}-\xi
\right\Vert \leq C_{\gamma }\left\Vert g\right\Vert ^{\gamma +1}\text{.}
\end{eqnarray*}%
Then similar as in $\left( \ref{inner ode part 1}\right) $ and $\left( \ref%
{inner ode part 2}\right) $, when $k_{1}+k_{2}+k_{3}\leq N$, we have 
\begin{eqnarray*}
&&\iiint_{0<u_{1}<u_{2}<u_{3}}D\left( F^{k_{2}}\circ F^{k_{1}}\right) \left(
I_{d}\right) F^{k_{3}}\left( I_{d}\right) \left( y_{u_{1}}\right)
du_{1}du_{2}du_{3} \\
&=&\iiint_{0<u_{1}<u_{2}<u_{3}}\left( F^{k_{3}}\circ F^{k_{2}}\circ
F^{k_{1}}\right) \left( I_{d}\right) \left( y_{u_{1}}\right)
du_{1}du_{2}du_{3}\text{;}
\end{eqnarray*}%
when $k_{1}+k_{2}+k_{3}\geq N+1$, we have 
\begin{equation*}
\iiint_{0<u_{1}<u_{2}<u_{3}}D\left( F^{k_{2}}\circ F^{k_{1}}\right) \left(
I_{d}\right) F^{k_{3}}\left( I_{d}\right) \left( y_{u_{1}}\right)
du_{1}du_{2}du_{3}\leq C_{\gamma }\left\Vert g\right\Vert ^{\gamma +1}.
\end{equation*}

Repeating this "subtraction and estimation" process for $N$ times, we get 
\begin{equation*}
\left\Vert y_{1}-\xi -\sum_{j=1}^{N}\frac{1}{j!}\sum_{k_{i}\geq
1,k_{1}+\cdots +k_{j}\leq N}\left( F^{k_{j}}\circ \cdots \circ
F^{k_{1}}\right) \left( I_{d}\right) \left( \xi \right) \right\Vert \leq
C_{\gamma }\left\Vert g\right\Vert ^{\gamma +1}\text{.}
\end{equation*}%
Since $f^{\circ k}$ is linear in $\mathcal{V}^{\otimes k}$ (Definition \ref%
{Definition of f circ k} on page \pageref{Definition of f circ k}), we have 
\begin{eqnarray*}
&&\sum_{j=1}^{N}\frac{1}{j!}\sum_{k_{i}\geq 1,k_{1}+\cdots +k_{j}\leq
N}\left( F^{k_{j}}\circ \cdots \circ F^{k_{1}}\right) \left( I_{d}\right)
\left( \xi \right) \\
&=&\sum_{j=1}^{N}\frac{1}{j!}\sum_{k_{i}\geq 1,k_{1}+\cdots +k_{j}\leq
N}f^{\circ \left( k_{1}+\cdots +k_{j}\right) }\pi _{k_{j}}\left( \log
_{N}\left( g\right) \right) \otimes \cdots \otimes \pi _{k_{1}}\left( \log
_{N}\left( g\right) \right) \left( I_{d}\right) \left( \xi \right) \\
&=&\sum_{k=1}^{N}f^{\circ k}\left( \sum_{k_{i}\geq 1,k_{1}+\cdots +k_{j}=k}%
\frac{1}{j!}\pi _{k_{j}}\left( \log _{N}\left( g\right) \right) \otimes
\cdots \otimes \pi _{k_{1}}\left( \log _{N}\left( g\right) \right) \right)
\left( I_{d}\right) \left( \xi \right) \\
&=&\sum_{k=1}^{N}f^{\circ k}\pi _{k}\left( g\right) \left( I_{d}\right)
\left( \xi \right) \text{.}
\end{eqnarray*}%
Therefore, we have 
\begin{equation*}
\left\Vert y_{1}-\xi -\left( \sum_{k=1}^{N}f^{\circ k}\pi _{k}\left(
g\right) \left( I_{d}\right) \left( \xi \right) \right) \right\Vert \leq
C_{\gamma }\left\Vert g\right\Vert ^{\gamma +1}\text{.}
\end{equation*}
\end{proof}

\begin{lemma}
\label{Lemma taylor expansion of f(y)}Suppose $\mathcal{V}$ and $\mathcal{U}$
are two Banach spaces, $f\in L\left( \mathcal{V},C^{\gamma }\left( \mathcal{U%
},\mathcal{U}\right) \right) $ for some $\gamma >1$ and $\xi \in \mathcal{U}$%
. Suppose $g\in G^{\lfloor \gamma \rfloor +1}\left( \mathcal{V}\right) $.
Then, the unique solution of the ordinary differential equation%
\begin{eqnarray*}
dy_{u} &=&\sum_{k=1}^{\lfloor \gamma \rfloor }f^{\circ k}\pi _{k}\left( \log
_{\lfloor \gamma \rfloor +1}\left( g\right) \right) \left( I_{d}\right)
\left( y_{u}\right) du\text{, }u\in \left[ 0,1\right] \text{,} \\
\text{ }y_{0} &=&\xi +f^{\circ \left( \lfloor \gamma \rfloor +1\right) }\pi
_{\lfloor \gamma \rfloor +1}\left( \log _{\lfloor \gamma \rfloor +1}\left(
g\right) \right) \left( I_{d}\right) \left( \xi \right) \text{,}
\end{eqnarray*}%
satisfies, for $k=1,2,\dots ,\lfloor \gamma \rfloor +1$, and any $v\in 
\mathcal{V}^{\otimes k}$,%
\begin{equation}
\left\Vert f^{\circ k}\left( v\right) \left( I_{d}\right) \left(
y_{1}\right) -\sum_{j=0}^{\lfloor \gamma \rfloor +1-k}f^{\circ \left(
j+k\right) }\left( \pi _{j}\left( g\right) \otimes v\right) \left(
I_{d}\right) \left( \xi \right) \right\Vert \leq C_{\gamma }\left\Vert
v\right\Vert \left\vert f\right\vert _{Lip\left( \gamma \right) }^{\gamma
+1}\left\Vert g\right\Vert ^{\gamma +1-k}\text{.}
\label{inner taylor expansiono f f(y)}
\end{equation}
\end{lemma}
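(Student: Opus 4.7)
The strategy is to mirror the proof of Lemma~\ref{Lemma Euler expansion of the solution of ODE} but applied to the auxiliary function $r:=f^{\circ k}(v)(I_d)$ in place of $I_d$. From $|r|_{Lip(\gamma-k+1)}\le C_\gamma\|v\||f|_{Lip(\gamma)}^k$ (the same derivation as (\ref{inner estimate of fcirck Id})), $r$ admits a meaningful Taylor remainder of order $\gamma+1-k$, which is exactly the exponent we are trying to reach. I would first rescale to $|f|_{Lip(\gamma)}=1$ and dispose of the case $\|g\|>1$ by crude estimates, exactly as in Lemma~\ref{Lemma Euler expansion of the solution of ODE}. In the remaining case $\|g\|\le 1$, one has $\sup_{u\in[0,1]}\|y_u-\xi\|\le C_\gamma\|g\|$.

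Next set $p:=f^{\circ k}(v)$, $N:=\lfloor\gamma\rfloor+1$, and $F^l:=f^{\circ l}\pi_l(\log_N g)$ for $l=1,\dots,N-1$. Since $\pi_l(\log_N g)\in[\mathcal{V}]^l$, Lemma~\ref{Lemma exchange of Id and dot} gives that each $F^l$ is a first-order differential operator, and hence
\[
(Dr)(y_u)\,F^l(I_d)(y_u)=F^l(r)(y_u)=(F^l\circ p)(I_d)(y_u)=f^{\circ(l+k)}(\pi_l(\log_N g)\otimes v)(I_d)(y_u).
\]
Plugging this into $\tfrac{d}{du}r(y_u)=\sum_{l=1}^{N-1}F^l(r)(y_u)$, integrating from $0$ to $1$, replacing $y_u$ by $y_0$ and then $y_0$ by $\xi$, and iterating $N-k$ times in the same pattern as Lemma~\ref{Lemma Euler expansion of the solution of ODE}, one obtains
\[
r(y_1)=\sum_{j=0}^{N-k}\frac{1}{j!}\sum_{\substack{l_1,\dots,l_j\ge 1\\ l_1+\cdots+l_j\le N-k}}(F^{l_j}\circ\cdots\circ F^{l_1}\circ p)(I_d)(\xi)+R,
\]
where the remainder $R$ collects: (i) iterated integrals whose total differential order $l_1+\cdots+l_j+k$ exceeds $N$, bounded directly via the analogue of (\ref{inner composition of differential operators}) and $\|\pi_l(\log_N g)\|\le C\|g\|^l$; (ii) errors from replacing $y_u$ by $y_0$ and $y_0$ by $\xi$, controlled using the H\"{o}lder exponent $\gamma+1-(l_1+\cdots+l_j+k)$ of $(F^{l_j}\circ\cdots\circ F^{l_1}\circ p)(I_d)$ together with $\|y_u-\xi\|\le C_\gamma\|g\|$ and $\|y_0-\xi\|\le C_\gamma\|g\|^N$. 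Each contribution to $R$ then works out to $O(\|v\|\|g\|^{\gamma+1-k})$.

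To conclude, I would invoke the identity $\exp_N(\log_N g)=g$ levelwise,
\[
\pi_m(g)=\sum_{j=1}^{m}\frac{1}{j!}\sum_{\substack{l_1,\dots,l_j\ge 1\\ l_1+\cdots+l_j=m}}\pi_{l_j}(\log_N g)\otimes\cdots\otimes\pi_{l_1}(\log_N g),\qquad 1\le m\le N,
\]
together with linearity of $f^{\circ(m+k)}$, to collapse the main sum above into $\sum_{j=0}^{N-k}f^{\circ(j+k)}(\pi_j(g)\otimes v)(I_d)(\xi)$, which is exactly the expansion required by (\ref{inner taylor expansiono f f(y)}). The main obstacle is the remainder bookkeeping in step two: at each iteration level one must align the $C^{\gamma+1-(l_1+\cdots+l_j+k)}$ regularity of the iterated integrand with the available H\"{o}lder bound on $y-\xi$, and keep track of which products $l_1+\cdots+l_j+k$ remain in the main sum versus which are absorbed into $R$. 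This is the same combinatorial exercise carried out in the proof of Lemma~\ref{Lemma Euler expansion of the solution of ODE}, merely shifted by the starting order $k$, and presents no new conceptual difficulty.
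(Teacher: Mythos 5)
Your proposal is correct and follows exactly the route the paper intends: the paper's own proof of this lemma is the single line ``This lemma can be proved similarly as Lemma~\ref{Lemma Euler expansion of the solution of ODE},'' and your argument is precisely the mirror of that proof obtained by replacing $I_d$ with $r=f^{\circ k}(v)(I_d)$, exploiting $\left|r\right|_{Lip(\gamma-k+1)}\leq C_\gamma\left\Vert v\right\Vert\left\vert f\right\vert_{Lip(\gamma)}^{k}$ and the identity $F^{l}(r)=\left(F^{l}\circ p\right)(I_d)=f^{\circ(l+k)}\bigl(\pi_{l}(\log_N g)\otimes v\bigr)(I_d)$. The remainder bookkeeping and the final collapse via $\exp_N(\log_N g)=g$ are all handled correctly, so no substantive gap remains.
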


\begin{proof}
This lemma can be proved similarly as Lemma \ref{Lemma Euler expansion of
the solution of ODE}.
\end{proof}

\begin{lemma}
\label{Lemma difference of two steps and one step}Suppose $\mathcal{V}$ and $%
\mathcal{U}$ are two Banach spaces, $f\in L\left( \mathcal{V},C^{\gamma
}\left( \mathcal{U},\mathcal{U}\right) \right) $ for some $\gamma >1$ and $%
\xi \in \mathcal{U}$. Denote $N:=\lfloor \gamma \rfloor +1$, and suppose $%
g\in G^{N}\left( \mathcal{V}\right) $. Denote $y^{g}$ as the solution of the
ordinary differential equation:%
\begin{eqnarray*}
dy_{u}^{g} &=&\left( \sum_{k=1}^{N-1}f^{\circ k}\pi _{k}\left( \log
_{N}\left( g\right) \right) \left( I_{d}\right) \left( y_{u}^{g}\right)
\right) du\text{, }u\in \left[ 0,1\right] \text{,} \\
\text{ }y_{0}^{g} &=&\xi +f^{\circ N}\pi _{N}\left( \log _{N}\left( g\right)
\right) \left( I_{d}\right) \left( \xi \right) \text{.}
\end{eqnarray*}%
For $g,h\in G^{N}\left( \mathcal{V}\right) $, denote $y^{g,h}$ as the unique
solution to the integral equation:%
\begin{equation*}
y_{t}^{g,h}=\left\{ 
\begin{array}{cc}
\xi +f^{\circ N}\pi _{N}\left( \log _{N}\left( g\right) \right) \left(
I_{d}\right) \left( \xi \right) +\int_{0}^{t}\left( \sum_{k=1}^{N-1}f^{\circ
k}\pi _{k}\left( \log _{N}\left( g\right) \right) \left( I_{d}\right) \left(
y_{u}^{g,h}\right) \right) du\text{,} & t\in \left[ 0,1\right] \\ 
y_{1}^{g,h}+f^{\circ N}\pi _{N}\left( \log _{N}\left( h\right) \right)
\left( I_{d}\right) \left( y_{1}^{g,h}\right) +\int_{1}^{t}\left(
\sum_{k=1}^{N-1}f^{\circ k}\pi _{k}\left( \log _{N}\left( h\right) \right)
\left( I_{d}\right) \left( y_{u}^{g,h}\right) \right) du\text{,} & t\in (1,2]%
\end{array}%
\right. \text{.}
\end{equation*}%
Then we have%
\begin{equation*}
\left\Vert y_{2}^{g,h}-y_{1}^{g\otimes h}\right\Vert \leq C_{\gamma
}\left\vert f\right\vert _{Lip\left( \gamma \right) }^{\gamma +1}\left(
\left\Vert g\right\Vert \vee \left\Vert h\right\Vert \vee \left\Vert
g\otimes h\right\Vert \right) ^{\gamma +1}\text{.}
\end{equation*}
\end{lemma}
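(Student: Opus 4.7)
The plan is to estimate both $y^{g,h}_2$ and $y^{g\otimes h}_1$ by the same explicit Euler-type expansion based at $\xi$, and then use Lemma~\ref{Lemma taylor expansion of f(y)} to reconcile the cross terms coming from the Taylor expansion of vector fields with the group product structure.

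First, I would note that on $[0,1]$ the path $y^{g,h}$ agrees with the one-step path $y^{g}$ appearing in Lemma~\ref{Lemma Euler expansion of the solution of ODE}: same drift, same initial condition. Hence by that lemma,
\begin{equation*}
\bigl\Vert y_{1}^{g,h}-\xi -\textstyle\sum_{k=1}^{N}f^{\circ k}\pi _{k}(g)(I_{d})(\xi )\bigr\Vert \leq C_{\gamma }(|f|_{Lip(\gamma )}\Vert g\Vert )^{\gamma +1}.
\end{equation*}
On $[1,2]$, shifting time by $1$ turns $u\mapsto y_{1+u}^{g,h}$ into the $y$-path of Lemma~\ref{Lemma Euler expansion of the solution of ODE} driven by $h$ with starting point $y_{1}^{g,h}$ in place of $\xi$, so the same lemma gives
\begin{equation*}
\bigl\Vert y_{2}^{g,h}-y_{1}^{g,h}-\textstyle\sum_{k=1}^{N}f^{\circ k}\pi _{k}(h)(I_{d})(y_{1}^{g,h})\bigr\Vert \leq C_{\gamma }(|f|_{Lip(\gamma )}\Vert h\Vert )^{\gamma +1}.
\end{equation*}
A third application, this time to $y^{g\otimes h}$, yields
\begin{equation*}
\bigl\Vert y_{1}^{g\otimes h}-\xi -\textstyle\sum_{k=1}^{N}f^{\circ k}\pi _{k}(g\otimes h)(I_{d})(\xi )\bigr\Vert \leq C_{\gamma }(|f|_{Lip(\gamma )}\Vert g\otimes h\Vert )^{\gamma +1}.
\end{equation*}

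Next, I would Taylor-expand the term $f^{\circ k}\pi_k(h)(I_d)(y_1^{g,h})$ around $\xi$ using Lemma~\ref{Lemma taylor expansion of f(y)} (applied with the group-like element $g$ and test element $v=\pi_k(h)$): for each $k=1,\dots ,N$,
\begin{equation*}
\bigl\Vert f^{\circ k}(\pi_{k}(h))(I_{d})(y_{1}^{g,h}) - \textstyle\sum_{j=0}^{N-k}f^{\circ (j+k)}(\pi_{j}(g)\otimes \pi_{k}(h))(I_{d})(\xi )\bigr\Vert \leq C_{\gamma }\Vert \pi_{k}(h)\Vert |f|_{Lip(\gamma )}^{\gamma +1}\Vert g\Vert^{\gamma +1-k}.
\end{equation*}
Summing these over $k$ and collecting terms by total degree $m=j+k$ rearranges the right-hand side into $\sum_{m=1}^{N}\sum_{l=0}^{m-1}f^{\circ m}(\pi_{l}(g)\otimes \pi_{m-l}(h))(I_{d})(\xi )$.

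Finally, I would combine this with the explicit formula $\pi_{m}(g\otimes h)=\sum_{l=0}^{m}\pi_{l}(g)\otimes \pi_{m-l}(h)$ (with $\pi_0(g)=\pi_0(h)=1$) from Definition~\ref{Definition of nilpotent Lie group}: the $l=m$ slice contributes exactly $\sum_{m=1}^{N}f^{\circ m}\pi_{m}(g)(I_{d})(\xi)$, which cancels the corresponding Euler term coming from $y_{1}^{g,h}$; the remaining $l=0,\dots,m-1$ slices are precisely what the Taylor expansion produces. Substituting everything back, the three Euler approximations telescope and $y_{2}^{g,h}$ and $y_{1}^{g\otimes h}$ are both seen to equal $\xi+\sum_{m=1}^{N}f^{\circ m}\pi_{m}(g\otimes h)(I_{d})(\xi)$ up to error bounded by $C_{\gamma}|f|_{Lip(\gamma)}^{\gamma +1}(\Vert g\Vert \vee \Vert h\Vert \vee \Vert g\otimes h\Vert)^{\gamma +1}$, which is the claimed bound. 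The case $\Vert g\Vert\vee\Vert h\Vert>1$ is handled separately by a crude \emph{a priori} estimate on $\Vert y_{2}^{g,h}\Vert$ and $\Vert y_{1}^{g\otimes h}\Vert$, exactly as in the $\Vert g\Vert>1$ reduction inside Lemma~\ref{Lemma Euler expansion of the solution of ODE}. The only bookkeeping subtlety, and hence the main obstacle, is the clean matching between the Taylor expansion indices $(j,k)$ and the group convolution indices $(l,m-l)$; once this combinatorial identification is set up, the individual error contributions are uniformly of order $\Vert g\Vert^{a}\Vert h\Vert^{b}$ with $a+b\geq \gamma+1$, which collapses into the stated homogeneous bound.
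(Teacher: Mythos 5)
Your proposal is correct and follows essentially the same route as the paper: decompose via $y_1^{g,h}=y_1^g$, apply Lemma~\ref{Lemma Euler expansion of the solution of ODE} three times (to $y^{g}$, to the $[1,2]$ piece of $y^{g,h}$, and to $y^{g\otimes h}$), re-center the drift at $y_1^g$ around $\xi$ using Lemma~\ref{Lemma taylor expansion of f(y)}, and match the resulting double sum against $\pi_m(g\otimes h)=\sum_{l}\pi_l(g)\otimes\pi_{m-l}(h)$ so that the main terms cancel exactly. The only cosmetic difference is that the paper's lemmas already carry the homogeneous bounds uniformly in $\|g\|,\|h\|$, so the separate $\|g\|\vee\|h\|>1$ case you mention is subsumed and not treated separately there.
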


\begin{proof}
We only prove the Lemma when $\left\vert f\right\vert _{Lip\left( \gamma
\right) }=1$. Otherwise, we replace $f$ by $\left\vert f\right\vert
_{Lip\left( \gamma \right) }^{-1}f$, and replace $g$ and $h$ by $\delta
_{\left\vert f\right\vert _{Lip\left( \gamma \right) }}g$ and $\delta
_{\left\vert f\right\vert _{Lip\left( \gamma \right) }}h$ respectively.

Since $\sum_{k=1}^{N-1}f^{\circ k}\pi _{k}\left( \log _{N}\left( g\right)
\right) \left( I_{d}\right) \in C^{1}\left( \mathcal{U},\mathcal{U}\right) $%
, based on the definition of $y^{g,h}$ and $y^{g}$, we have $%
y_{t}^{g,h}=y_{t}^{g}$, $t\in \left[ 0,1\right] $. For $g,h\in G^{N}\left( 
\mathcal{V}\right) $, by using Lemma \ref{Lemma Euler expansion of the
solution of ODE}, we get%
\begin{eqnarray*}
&&\left\Vert y_{2}^{g,h}-y_{1}^{g\otimes h}\right\Vert \\
&=&\left\Vert y_{1}^{g}-\xi +y_{2}^{g,h}-y_{1}^{g}-\left( y_{1}^{g\otimes
h}-\xi \right) \right\Vert \\
&\leq &\left\Vert \sum_{k=1}^{N}f^{\circ k}\pi _{k}\left( g\right) \left(
I_{d}\right) \left( \xi \right) +\sum_{k=1}^{N}f^{\circ k}\pi _{k}\left(
h\right) \left( I_{d}\right) \left( y_{1}^{g}\right) -\sum_{k=1}^{N}f^{\circ
k}\pi _{k}\left( g\otimes h\right) \left( I_{d}\right) \left( \xi \right)
\right\Vert \\
&&+C_{\gamma }\left( \left\Vert g\right\Vert \vee \left\Vert h\right\Vert
\vee \left\Vert g\otimes h\right\Vert \right) ^{\gamma +1}\text{.}
\end{eqnarray*}%
Based on Lemma \ref{Lemma taylor expansion of f(y)}, for $k=1,2,\dots ,N$,%
\begin{equation*}
\left\Vert f^{\circ k}\pi _{k}\left( h\right) \left( I_{d}\right) \left(
y_{1}^{g}\right) -\sum_{j=0}^{N-k}f^{\circ \left( j+k\right) }\left( \pi
_{j}\left( g\right) \otimes \pi _{k}\left( h\right) \right) \left(
I_{d}\right) \left( \xi \right) \right\Vert \leq C_{\gamma }\left\Vert
h\right\Vert ^{k}\left\Vert g\right\Vert ^{\gamma +1-k}\text{.}
\end{equation*}%
As a result,%
\begin{eqnarray*}
&&\left\Vert y_{2}^{g,h}-y_{1}^{g\otimes h}\right\Vert \\
&\leq &\left\Vert \sum_{k=1}^{N}f^{\circ k}\pi _{k}\left( g\right) \left(
I_{d}\right) \left( \xi \right) +\sum_{k=1}^{N}\sum_{j=0}^{N-k}f^{\circ
\left( j+k\right) }\left( \pi _{j}\left( g\right) \otimes \pi _{k}\left(
h\right) \right) \left( I_{d}\right) \left( \xi \right)
-\sum_{k=1}^{N}f^{\circ k}\pi _{k}\left( g\otimes h\right) \left(
I_{d}\right) \left( \xi \right) \right\Vert \\
&&+\,C_{\gamma }\left( \left\Vert g\right\Vert \vee \left\Vert h\right\Vert
\vee \left\Vert g\otimes h\right\Vert \right) ^{\gamma +1}+C_{\gamma
}\sum_{k=1}^{N}\left\Vert h\right\Vert ^{k}\left\Vert g\right\Vert ^{\gamma
+1-k} \\
&\leq &C_{\gamma }\left( \left\Vert g\right\Vert \vee \left\Vert
h\right\Vert \vee \left\Vert g\otimes h\right\Vert \right) ^{\gamma +1}\text{%
.}
\end{eqnarray*}
\end{proof}

\begin{lemma}
\label{Lemma uniform bound in p-var of solution of ode}Suppose $\mathcal{U}$
and $\mathcal{V}$ are two Banach spaces, $x:\left[ 0,T\right] \rightarrow 
\mathcal{V}$ is a continuous bounded variation path, and $f\in L\left( 
\mathcal{V},C^{\gamma }\left( \mathcal{U},\mathcal{U}\right) \right) $ for $%
\gamma \geq 1$. Denote $y:\left[ 0,T\right] \rightarrow \mathcal{U}$ as the
unique solution of the ordinary differential equation%
\begin{equation}
dy=f\left( y\right) dx\text{, }y_{0}=\xi \in \mathcal{U}\text{.}
\label{ODE in lemma}
\end{equation}%
Then for any $p\in \lbrack 1,\gamma +1)$, there exists constant $C_{p,\gamma
}$ (which only depends on $p$ and $\gamma $), such that for any interval $%
\left[ s,t\right] \subset \left[ 0,T\right] $ satisfying $\left\vert
f\right\vert _{Lip\left( \gamma \right) }\left\Vert S_{\left[ p\right]
}\left( x\right) \right\Vert _{p-var,\left[ s,t\right] }\leq 1$, we have%
\begin{equation}
\left\Vert S_{\left[ p\right] }\left( y\right) \right\Vert _{p-var,\left[ s,t%
\right] }\leq C_{p,\gamma }\left\vert f\right\vert _{Lip\left( \gamma
\right) }\left\Vert S_{\left[ p\right] }\left( x\right) \right\Vert _{p-var,%
\left[ s,t\right] }\text{.}  \label{uniform p-variation estimate}
\end{equation}
\end{lemma}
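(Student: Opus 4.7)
The plan is to establish a local Euler-type estimate for each level of the truncated signature $S_{[p]}(y)$ on sufficiently small sub-intervals, and then globalize it by partition refinement together with Chen's identity. By the rescaling $f\mapsto|f|_{Lip(\gamma)}^{-1}f$ and $x\mapsto\delta_{|f|_{Lip(\gamma)}}x$ we may assume $|f|_{Lip(\gamma)}=1$. Set $\omega(u,v):=\|S_{[p]}(x)\|_{p-var,[u,v]}^{p}$; this is a continuous, superadditive control, and the hypothesis reduces to $\omega(s,t)\le 1$. The goal is to prove that, for each $k\in\{1,\dots,[p]\}$ and every sub-interval $[u,v]\subseteq[s,t]$ with $\omega(u,v)$ below a fixed threshold,
\begin{equation*}
\|\pi_{k}(S_{[p]}(y)_{u,v})\|\le C_{p,\gamma}\,\omega(u,v)^{k/p}.
\end{equation*}

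I would obtain the local bound by a direct Euler-type expansion of the iterated integrals of $y$, modelled on Lemmas \ref{Lemma Euler expansion of the solution of ODE} and \ref{Lemma taylor expansion of f(y)} but applied to the actual ODE rather than the approximating one. For $k=1$, Taylor-expanding $f(y_{r})$ about $y_{u}$ inside $y_{v}-y_{u}=\int_{u}^{v}f(y_{r})(dx_{r})$ and iteratively substituting the same expansion for $y_{r}-y_{u}$ produces an identity of the form
\begin{equation*}
y_{v}-y_{u}=\sum_{j=1}^{N}f^{\circ j}\!\left(\pi_{j}(S_{N}(x)_{u,v})\right)(I_{d})(y_{u})+R_{u,v},\qquad N:=\lfloor\gamma\rfloor+1.
\end{equation*}
The leading sum is controlled by the dimension-free bound $|f^{\circ j}(v)(I_{d})|_{Lip(\gamma-j+1)}\le C_{\gamma}\|v\||f|_{Lip(\gamma)}^{j}$ from Definition \ref{Definition of f circ k}, together with $\|\pi_{j}(S_{N}(x)_{u,v})\|\le C_{p,\gamma}\omega(u,v)^{j/p}$ for $j\le N$, which follows from the definition of $p$-variation when $j\le[p]$ and from Lyons' extension theorem when $[p]<j\le N$. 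The remainder $R_{u,v}$ is of order $\omega(u,v)^{(\gamma+1)/p}$ and is handled by a bootstrap: one first derives $\|y_{v}-y_{u}\|\le C\omega^{1/p}$ at a crude level, then feeds that bound back into the Taylor remainders to upgrade the error. The same type of expansion, now applied to the $k$-fold iterated integral of $y$ and invoking Lemma \ref{Lemma exchange of Id and dot} to recast higher-order differential operators in first-order form, yields the analogous local bound at levels $k=2,\dots,[p]$.

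To globalize, take any partition $D=\{t_{j}\}$ of $[s,t]$ and insert extra points to form a refinement $\tilde D=\{\tilde t_{j}\}$ on which $\omega(\tilde t_{j},\tilde t_{j+1})$ stays below the local threshold; this is possible because $\omega$ is continuous. On $\tilde D$, the local estimate together with superadditivity of $\omega$ gives
\begin{equation*}
\sum_{j}\|\pi_{k}(S_{[p]}(y)_{\tilde t_{j},\tilde t_{j+1}})\|^{p/k}\le C\sum_{j}\omega(\tilde t_{j},\tilde t_{j+1})\le C\,\omega(s,t).
\end{equation*}
Chen's identity $S_{[p]}(y)_{t_{j},t_{j+1}}=\bigotimes_{i}S_{[p]}(y)_{\tilde t_{i},\tilde t_{i+1}}$, combined with the elementary inequality $\|a+b\|^{p/k}\le 2^{p/k-1}(\|a\|^{p/k}+\|b\|^{p/k})$ for $p\ge k$, shows that moving from $\tilde D$ back to $D$ changes the $p/k$-power sum by at most a multiplicative constant depending only on $p$ and $k$, so the bound propagates to $D$ and, taking the supremum, yields $\|S_{[p]}(y)\|_{p-var,[s,t]}\le C_{p,\gamma}\omega(s,t)^{1/p}$, as required.

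The principal obstacle is producing the local Euler estimate with an error on the $p$-variation scale $\omega^{(\gamma+1)/p}$ rather than the crude $1$-variation scale $\|x\|_{1-var}^{\gamma+1}$ that a direct Taylor remainder yields; this is precisely what forces the bootstrap and the careful re-expression of the remainders as honest iterated integrals of $x$. The dimension-freeness of the argument rests on the fact that every intermediate estimate uses only $|f|_{Lip(\gamma)}$ and the chosen tensor norms on $\mathcal{V}^{\otimes k}$, never an ambient-dimension-dependent quantity; this is what sets the present proof apart from its dimension-dependent counterparts in \cite{Davie A. M} and \cite{Friz and Victoir book}.
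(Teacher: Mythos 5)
Your proposal and the paper's proof take genuinely different routes, and unfortunately yours has a circularity problem at its core. The paper sidesteps any direct Euler estimate entirely: it lifts the Picard iterates to rough integrals $Z(n)$ in $G^{[p]}(\mathcal{V}\oplus\mathcal{U})$, invokes Prop.\ 5.9 of \cite{Lyonsnotes} to get a uniform $p$\mbox{-}variation bound $\sup_{n}\|Y(n)\|_{p\text{-}var,[s,t]}\leq C_{p,\gamma}|f|_{Lip(\gamma)}\|S_{[p]}(x)\|_{p\text{-}var,[s,t]}$ that holds for \emph{all} Picard iterates simultaneously, identifies $Y(n)=S_{[p]}(y(n))$ for the classical iterates $y(n)$, shows $y(n)\to y$ in $1$\mbox{-}variation, and finishes with lower semi\mbox{-}continuity of $p$\mbox{-}variation. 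At no point does it need the local Euler expansion of the solution with remainder controlled by $\omega^{(\gamma+1)/p}$; the $p$\mbox{-}variation control is imported wholesale from the rough-integration machinery.

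The gap in your argument is the step you yourself flag as the principal obstacle: "one first derives $\|y_{v}-y_{u}\|\leq C\omega^{1/p}$ at a crude level, then feeds that bound back into the Taylor remainders." But $\|y_{v}-y_{u}\|\leq C\omega(u,v)^{1/p}$, with $\omega(u,v)=\|S_{[p]}(x)\|_{p\text{-}var,[u,v]}^{p}$, \emph{is} the level-one case of the lemma you are proving. A direct estimate from $dy=f(y)\,dx$ gives only $\|y_{v}-y_{u}\|\leq |f|_{Lip(\gamma)}\|x\|_{1\text{-}var,[u,v]}$, and there is no constant $C$ making $\|x\|_{1\text{-}var,[u,v]}\leq C\,\omega(u,v)^{1/p}$ for general $x$: the whole point of the $p$\mbox{-}variation control is that $\|x\|_{1\text{-}var}$ can be made arbitrarily large while $\omega(u,v)$ stays bounded (e.g.\ by adding a highly oscillatory component of small amplitude). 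So the bootstrap never gets off the ground — it presupposes the conclusion. Notice that in the paper's own architecture the Euler estimate (Lemma \ref{Lemma important}) \emph{uses} the present lemma precisely at the point where it needs $\|y_{u}-y_{s}\|\leq C_{p,\gamma}\omega(s,t)^{1/p}$ (the paper's inner estimation of the ODE solution), so proving the present lemma by an Euler expansion would require the dependency to run in both directions. A version of your strategy can be made to work — the Davie and Friz--Victoir treatments establish apriori bounds on numerical Euler schemes rather than on the exact solution, then pass to the limit — but that is a different and more elaborate induction than what you describe, and it requires carefully quantifying how errors propagate across the scheme rather than over the true flow. Your globalization step (passing from a fine partition to a coarse one via Chen's identity) also skips the fact that for $k\geq 2$ the level-$k$ increment over a coarse interval is a tensor product involving all lower levels, so the elementary convexity inequality you cite does not apply termwise; this is fixable (it is a standard extension argument), but it is not as immediate as stated.
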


\begin{proof}
Define $h:\mathcal{V}\oplus \mathcal{U\rightarrow }L\left( \mathcal{V}\oplus 
\mathcal{U},\mathcal{V}\oplus \mathcal{U}\right) $ as%
\begin{equation*}
h\left( v_{1},u_{1}\right) \left( v_{2},u_{2}\right) =\left( v_{2},f\left(
v_{2}\right) \left( u_{1}+\xi \right) \right) \text{, \ }\forall
v_{1},v_{2}\in \mathcal{V}\text{, }\forall u_{1},u_{2}\in \mathcal{U}\text{.}
\end{equation*}%
We define geometric $p$-rough paths $Z\left( n\right) :\left[ 0,T\right]
\rightarrow G^{\left[ p\right] }\left( \mathcal{V}\oplus \mathcal{U}\right) $%
, $n\geq 0$, recursively as the rough integral (in the sense of Def 4.9 \cite%
{Lyonsnotes}):%
\begin{eqnarray}
Z\left( 0\right) _{t} &:&=\left( S_{\left[ p\right] }\left( x\right)
_{t},0\right) \in G^{\left[ p\right] }\left( \mathcal{V}\oplus \mathcal{U}%
\right) \text{, }t\in \left[ 0,T\right] \text{,}
\label{inner definition of Y(n+1)} \\
Z\left( n+1\right) _{t} &:&=\int_{0}^{t}h\left( Z\left( n\right) \right)
dZ\left( n\right) \text{, \ }t\in \left[ 0,T\right] \text{, }n\geq 0\text{,}
\notag
\end{eqnarray}%
and define $Y\left( n\right) :\left[ 0,T\right] \rightarrow G^{\left[ p%
\right] }\left( \mathcal{U}\right) $ as $Y\left( n\right) :=\pi _{G^{\left[ p%
\right] }\left( \mathcal{U}\right) }Z\left( n\right) $. Then based on Prop
5.9 \cite{Lyonsnotes}, there exists constant $C_{p,\gamma }$, which only
depends on $p$ and $\gamma $ and is finite whenever $\gamma >p-1$, such
that, for any interval $\left[ s,t\right] $ satisfying $\left\vert
f\right\vert _{Lip\left( \gamma \right) }\left\Vert S_{\left[ p\right]
}\left( x\right) \right\Vert _{p-var,\left[ s,t\right] }\leq 1$, we have%
\begin{equation}
\sup_{n}\left\Vert Y\left( n\right) \right\Vert _{p-var,\left[ s,t\right]
}\leq C_{p,\gamma }\left\vert f\right\vert _{Lip\left( \gamma \right)
}\left\Vert S_{\left[ p\right] }\left( x\right) \right\Vert _{p-var,\left[
s,t\right] }\text{.}  \label{inner estimation of p-var of Picard iteration}
\end{equation}%
(Indeed, by properly scaling $f$ and $S_{\left[ p\right] }\left( x\right) $,
the constant $C_{p,\gamma }$ in $\left( \ref{inner estimation of p-var of
Picard iteration}\right) $ can be chosen to be independent of $\left\vert
f\right\vert _{Lip\left( \gamma \right) }$ and $\left\Vert S_{\left[ p\right]
}\left( x\right) \right\Vert _{p-var,\left[ 0,T\right] }$.) On the other
hand, since $x$ is continuous with bounded variation, it can be checked
that, if we define continuous bounded variation paths $y\left( n\right) :%
\left[ 0,T\right] \rightarrow \mathcal{U}$, $n\geq 1$, recursively as%
\begin{eqnarray}
y\left( 0\right) _{t} &\equiv &0\in \mathcal{U}\text{, }t\in \left[ 0,T%
\right] \text{,}  \label{inner iteration of solution of ode} \\
y\left( n+1\right) _{t} &=&\int_{0}^{t}f\left( y\left( n\right) +\xi \right)
dx\text{, }t\in \left[ 0,T\right] \text{,}  \notag
\end{eqnarray}%
then based on the definition of rough integral in Def 4.9 \cite{Lyonsnotes},
it can be checked that, 
\begin{equation}
Y\left( n\right) =S_{\left[ p\right] }\left( y\left( n\right) \right) \text{%
, }\forall n\geq 0\text{.}  \label{inner relation between Y(n) and y(n)}
\end{equation}%
Combined with $\left( \ref{inner estimation of p-var of Picard iteration}%
\right) $, for interval $\left[ s,t\right] $ satisfying $\left\vert
f\right\vert _{Lip\left( \gamma \right) }\left\Vert S_{\left[ p\right]
}\left( x\right) \right\Vert _{p-var,\left[ s,t\right] }\leq 1$, we have%
\begin{equation}
\sup_{n}\left\Vert S_{\left[ p\right] }\left( y\left( n\right) \right)
\right\Vert _{p-var,\left[ s,t\right] }\leq C_{p,\gamma }\left\vert
f\right\vert _{Lip\left( \gamma \right) }\left\Vert S_{\left[ p\right]
}\left( x\right) \right\Vert _{p-var,\left[ s,t\right] }\text{.}
\label{inner uniform bound of p-var of Picard iteration}
\end{equation}

On the other hand, since $f$ is $Lip\left( \gamma \right) $ for $\gamma \geq
1$, by using $\left( \ref{inner iteration of solution of ode}\right) $, we
have, for any $\left[ s,t\right] \subset \left[ 0,T\right] $,%
\begin{eqnarray}
&&\left\Vert y\left( n+2\right) -y\left( n+1\right) \right\Vert _{1-var,
\left[ s,t\right] }  \label{inner inductive 1-var estimate} \\
&\leq &\left\vert f\right\vert _{Lip\left( \gamma \right) }\left\Vert
x\right\Vert _{1-var,\left[ s,t\right] }\left( \left\Vert y\left( n+1\right)
-y\left( n\right) \right\Vert _{1-var,\left[ s,t\right] }+\left\Vert y\left(
n+1\right) _{s}-y\left( n\right) _{s}\right\Vert \right) \text{.}  \notag
\end{eqnarray}%
Then we divide $\left[ 0,T\right] :=\cup _{j=0}^{m-1}\left[ t_{j},t_{j+1}%
\right] $ in such a way that%
\begin{equation*}
\left\vert f\right\vert _{Lip\left( \gamma \right) }\left\Vert x\right\Vert
_{1-var,\left[ t_{j},t_{j+1}\right] }\leq c<1\text{, }j=0,1,\dots ,m-1\text{.%
}
\end{equation*}%
Then, for $\left[ t_{j},t_{j+1}\right] $, $j=0,1,\dots ,m-1$, we let $n$
tends to infinity (in $\left( \ref{inner inductive 1-var estimate}\right) $%
), and get%
\begin{eqnarray*}
&&\overline{\lim }_{n\rightarrow \infty }\left\Vert y\left( n+1\right)
-y\left( n\right) \right\Vert _{1-var,\left[ t_{j},t_{j+1}\right] } \\
&\leq &c\,\overline{\lim }_{n\rightarrow \infty }\left\Vert y\left(
n+1\right) -y\left( n\right) \right\Vert _{1-var,\left[ t_{j},t_{j+1}\right]
}+c\,\overline{\lim }_{n\rightarrow \infty }\left\Vert y\left( n+1\right)
_{t_{j}}-y\left( n\right) _{t_{j}}\right\Vert \text{.}
\end{eqnarray*}%
Since $y\left( n\right) _{0}\equiv 0$, $\forall n\geq 0$, and $c\in \left(
0,1\right) $, we can prove inductively that%
\begin{equation*}
\lim_{n\rightarrow \infty }\left\Vert y\left( n+1\right) -y\left( n\right)
\right\Vert _{1-var,\left[ t_{j},t_{j+1}\right] }=0\text{, }j=0,1,\dots ,m-1%
\text{.}
\end{equation*}%
Thus%
\begin{equation*}
\lim_{n\rightarrow \infty }\left\Vert y\left( n+1\right) -y\left( n\right)
\right\Vert _{1-var,\left[ 0,T\right] }\leq
\sum_{j=0}^{m-1}\lim_{n\rightarrow \infty }\left\Vert y\left( n+1\right)
-y\left( n\right) \right\Vert _{1-var,\left[ t_{j},t_{j+1}\right] }=0\text{.}
\end{equation*}%
As a result, we have that $y\left( n\right) $ converge in $1$-variation as $n
$ tends to infinity (denote the limit as $\widetilde{y}$), and we have%
\begin{equation}
\lim_{n\rightarrow \infty }\max_{1\leq k\leq \left[ p\right] }\sup_{0\leq
s\leq t\leq T}\left\Vert \pi _{k}\left( S_{\left[ p\right] }\left( y\left(
n\right) \right) _{s,t}\right) -\pi _{k}\left( S_{\left[ p\right] }\left( 
\widetilde{y}\right) _{s,t}\right) \right\Vert =0\text{.}
\label{inner uniform convergence}
\end{equation}%
Based on $\left( \ref{inner iteration of solution of ode}\right) $ and let $n
$ tends to infinity, we have 
\begin{equation*}
\widetilde{y}_{t}=\int_{0}^{t}f\left( \widetilde{y}_{u}+\xi \right) dx_{u}%
\text{.}
\end{equation*}%
As a result, if denote $y$ as the unique solution of the ordinary
differential equation $\left( \ref{ODE in lemma}\right) $, then we have%
\begin{equation}
y=\widetilde{y}+\xi .  \label{inner relation between y and y tilde}
\end{equation}

Therefore, combine $\left( \ref{inner uniform bound of p-var of Picard
iteration}\right) $, $\left( \ref{inner uniform convergence}\right) $, $%
\left( \ref{inner relation between y and y tilde}\right) $ and use lower
semi-continuity of $p$-variation, we get, for interval $\left[ s,t\right] $
satisfying $\left\vert f\right\vert _{Lip\left( \gamma \right) }\left\Vert
S_{\left[ p\right] }\left( x\right) \right\Vert _{p-var,\left[ s,t\right]
}\leq 1$, 
\begin{eqnarray*}
\left\Vert S_{\left[ p\right] }\left( y\right) \right\Vert _{p-var,\left[ s,t%
\right] } &=&\left\Vert S_{\left[ p\right] }\left( \widetilde{y}+\xi \right)
\right\Vert _{p-var,\left[ s,t\right] }=\left\Vert S_{\left[ p\right]
}\left( \widetilde{y}\right) \right\Vert _{p-var,\left[ s,t\right] } \\
&\leq &\underline{\lim }_{n\rightarrow \infty }\left\Vert S_{\left[ p\right]
}\left( y\left( n\right) \right) \right\Vert _{p-var,\left[ s,t\right] }\leq
C_{p,\gamma }\left\vert f\right\vert _{Lip\left( \gamma \right) }\left\Vert
S_{\left[ p\right] }\left( x\right) \right\Vert _{p-var,\left[ s,t\right] }%
\text{.}
\end{eqnarray*}
\end{proof}

\bigskip

\noindent \textbf{Lemma \ref{Lemma important}} \ \ \textit{Suppose }$%
\mathcal{U}$\textit{\ and }$\mathcal{V}$\textit{\ are two Banach spaces, }$x:%
\left[ 0,T\right] \rightarrow \mathcal{V}$\textit{\ is a continuous bounded
variation path, }$f\in L\left( \mathcal{V},C^{\gamma }\left( \mathcal{U},%
\mathcal{U}\right) \right) $\textit{\ for }$\gamma >1$\textit{, and }$\xi
\in \mathcal{U}$\textit{. Denote }$y:\left[ 0,T\right] \rightarrow \mathcal{U%
}$\textit{\ as the\ unique solution to the ordinary differential equation}%
\begin{equation}
dy=f\left( y\right) dx\text{, \ }y_{0}=\xi \in \mathcal{U}\text{.}
\label{ODE in important Lemma}
\end{equation}%
\textit{Then for any }$p\in \lbrack 1,\gamma +1)$\textit{, there exists a
constant }$C_{p,\gamma }$\textit{, which only depends on }$p$\textit{\ and }$%
\gamma $\textit{, such that, for any }$0\leq s<t\leq T$\textit{, if we
denote }$y^{s,t}:\left[ 0,1\right] \rightarrow \mathcal{U}$\textit{\ as the
unique solution of the ordinary differential equation: (with }$y_{s}$\textit{%
\ denotes the value of }$y$\textit{\ in }$\left( \ref{ODE in important Lemma}%
\right) $\textit{\ at point }$s$\textit{) }%
\begin{eqnarray}
dy_{u}^{s,t} &=&\left( \sum_{k=1}^{\lfloor \gamma \rfloor }f^{\circ k}\pi
_{k}\left( \log _{\lfloor \gamma \rfloor +1}\left( S_{\lfloor \gamma \rfloor
+1}\left( x\right) _{s,t}\right) \right) \left( I_{d}\right) \left(
y_{u}^{s,t}\right) \right) du\text{, }u\in \left[ 0,1\right] \text{, }
\label{inner definition of y(s,t)} \\
y_{0}^{s,t} &=&y_{s}+f^{\circ \left( \lfloor \gamma \rfloor +1\right) }\pi
_{\lfloor \gamma \rfloor +1}\left( \log _{\lfloor \gamma \rfloor +1}\left(
S_{\lfloor \gamma \rfloor +1}\left( x\right) _{s,t}\right) \right) \left(
I_{d}\right) \left( y_{s}\right) \text{,}  \notag
\end{eqnarray}%
\textit{then }%
\begin{gather}
\left( 1\right) ,\left\Vert y_{t}-y_{1}^{s,t}\right\Vert \leq C_{p,\gamma
}\left\vert f\right\vert _{Lip\left( \gamma \right) }^{\gamma +1}\left\Vert
S_{\left[ p\right] }\left( x\right) \right\Vert _{p-var,\left[ s,t\right]
}^{\gamma +1}\text{,}  \label{inner estimate of ODE} \\
\left( 2\right) ,\left\Vert y_{t}-y_{s}-\sum_{k=1}^{\lfloor \gamma \rfloor
+1}f^{\circ k}\pi _{k}\left( S_{\lfloor \gamma \rfloor +1}\left( x\right)
_{s,t}\right) \left( I_{d}\right) \left( y_{s}\right) \right\Vert \leq
C_{p,\gamma }\left\vert f\right\vert _{Lip\left( \gamma \right) }^{\gamma
+1}\left\Vert S_{\left[ p\right] }\left( x\right) \right\Vert _{p-var,\left[
s,t\right] }^{\gamma +1}\text{.}  \notag
\end{gather}

\bigskip

\begin{proof}
\label{Proof of Lemma important}We only prove the first estimate in $\left( %
\ref{inner estimate of ODE}\right) $; the second follows based on Lemma \ref%
{Lemma Euler expansion of the solution of ODE} on page \pageref{Lemma Euler
expansion of the solution of ODE}. We prove the result when $\left\vert
f\right\vert _{Lip\left( \gamma \right) }=1$. The general case can be proved
by replacing $f$ by $\left\vert f\right\vert _{Lip\left( \gamma \right)
}^{-1}f$ and replacing $S_{\left[ p\right] }\left( x\right) $ by $\delta
_{\left\vert f\right\vert _{Lip\left( \gamma \right) }}\left( S_{\left[ p%
\right] }\left( x\right) \right) $ (in which case both $y$ in $\left( \ref%
{ODE in important Lemma}\right) $ and $y^{s,t}$ in $\left( \ref{inner
definition of y(s,t)}\right) $ would stay unchanged).

Denote $N:=\lfloor \gamma \rfloor +1$. Define $\omega :\left\{ \left(
s,t\right) |0\leq s\leq t\leq T\right\} \rightarrow \overline{%
\mathbb{R}
^{+}}$ as%
\begin{equation*}
\omega \left( s,t\right) :=\left\Vert S_{\left[ p\right] }\left( x\right)
\right\Vert _{p-var,\left[ s,t\right] }^{p}\text{.}
\end{equation*}%
Then it can be checked that, $\omega $ is continuous and is super-additive,
i.e. 
\begin{equation}
\omega \left( s,u\right) +\omega \left( u,t\right) \leq \omega \left(
s,t\right) \text{, \ }\forall 0\leq s\leq u\leq t\leq T\text{.}
\label{inner super additivity}
\end{equation}%
With $y^{s,t}$ defined at $\left( \ref{inner definition of y(s,t)}\right) $,
we define $\Gamma :\left\{ \left( s,t\right) |0\leq s\leq t\leq T\right\}
\rightarrow \mathcal{U}$ as%
\begin{equation*}
\Gamma _{s,t}:=y_{t}-y_{1}^{s,t}=y_{t}-y_{s}-\left( y_{1}^{s,t}-y_{s}\right) 
\text{.}
\end{equation*}%
For $0\leq s\leq u\leq t\leq T$, with $y^{s,u}$ defined at $\left( \ref%
{inner definition of y(s,t)}\right) $ and $x$ in $\left( \ref{ODE in
important Lemma}\right) $, we denote $\widetilde{y}^{u,t}$ as the unique
solution of the ordinary differential equation:%
\begin{eqnarray*}
d\widetilde{y}_{r}^{u,t} &=&\left( \sum_{k=1}^{N-1}f^{\circ k}\left(
I_{d}\right) \left( \widetilde{y}_{r}^{u,t}\right) \pi _{k}\left( \log
_{N}\left( S_{N}\left( x\right) _{u,t}\right) \right) \right) dr\text{, }%
r\in \left[ 0,1\right] \text{, } \\
\widetilde{y}_{0}^{u,t} &=&y_{1}^{s,u}+f^{\circ N}\left( I_{d}\right) \left(
y_{1}^{s,u}\right) \pi _{N}\left( \log _{N}\left( S_{N}\left( x\right)
_{u,t}\right) \right) \text{.}
\end{eqnarray*}%
For $0\leq s\leq u\leq t\leq T$, we denote piecewise continuous path $%
y^{s,u,t}:\left[ 0,2\right] \rightarrow \mathcal{U}$ by assigning%
\begin{equation}
y_{r}^{s,u,t}:=y_{r}^{s,u}\text{ when }r\in \left[ 0,1\right] \text{ and }%
y_{r}^{s,u,t}:=\widetilde{y}_{r-1}^{u,t}\text{ when }r\in (1,2]\text{.}
\label{inner definitiono f y(s,u,t)}
\end{equation}

Firstly, suppose $\left[ s,t\right] $ is an interval satisfying $\omega
\left( s,t\right) \leq 1$ and $u\in \left( s,t\right) $. Then%
\begin{eqnarray*}
\left\Vert \Gamma _{s,u}+\Gamma _{u,t}-\Gamma _{s,t}\right\Vert
&=&\left\Vert y_{1}^{s,u}-y_{s}+y_{1}^{u,t}-y_{u}-\left(
y_{1}^{s,t}-y_{s}\right) \right\Vert \\
&\leq &\left\Vert y_{2}^{s,u,t}-y_{1}^{s,t}\right\Vert +\left\Vert \left(
y_{2}^{s,u,t}-y_{1}^{s,u}\right) -\left( y_{1}^{u,t}-y_{u}\right)
\right\Vert \text{.}
\end{eqnarray*}%
Then, based on Lemma \ref{Lemma difference of two steps and one step} and
Lemma \ref{Lemma Euler expansion of the solution of ODE}, we have ($%
\left\vert f\right\vert _{Lip\left( \gamma \right) }=1$ and $\omega \left(
s,t\right) \leq 1$, $\left\{ \gamma \right\} :=\gamma -\lfloor \gamma
\rfloor $)%
\begin{eqnarray}
&&\left\Vert \Gamma _{s,u}+\Gamma _{u,t}-\Gamma _{s,t}\right\Vert
\label{inner divide an interval into two} \\
&\leq &C_{\gamma }\omega \left( s,t\right) ^{\frac{\gamma +1}{p}}+\left\Vert
\sum_{k=1}^{N}f^{\circ k}\pi _{k}\left( S_{\left[ p\right] }\left( x\right)
_{u,t}\right) \left( \left( I_{d}\right) \left( y_{1}^{s,u}\right) -\left(
I_{d}\right) \left( y_{u}\right) \right) \right\Vert  \notag \\
&\leq &C_{\gamma }\omega \left( s,t\right) ^{\frac{\gamma +1}{p}}+C_{\gamma
}\omega \left( s,t\right) ^{\frac{1}{p}}\left\Vert \Gamma _{s,u}\right\Vert
+C_{\gamma }\left\Vert y_{u}-y_{s}-\left( y_{1}^{s,u}-y_{s}\right)
\right\Vert ^{\left\{ \gamma \right\} }\omega \left( s,t\right) ^{\frac{N}{p}%
}\text{.}  \notag
\end{eqnarray}%
Based on the definition of $y^{s,u}$ (at $\left( \ref{inner definition of
y(s,t)}\right) $), when $\omega \left( s,t\right) \leq 1$, we have%
\begin{equation}
\left\Vert y_{1}^{s,u}-y_{s}\right\Vert \leq \left\Vert y^{s,u}\right\Vert
_{1-var,\left[ 0,1\right] }+\left\Vert y_{0}^{s,u}-y_{s}\right\Vert \leq
C_{\gamma }\omega \left( s,t\right) ^{\frac{1}{p}}\text{.}
\label{inner estimate of y^(s,u)}
\end{equation}%
On the other hand, according to Lemma \ref{Lemma uniform bound in p-var of
solution of ode}, there exists constant $C_{p,\gamma }$ (which only depends
on $p$ and $\gamma $, and is finite whenever $\gamma >p-1$), such that for
any interval $\left[ s,t\right] $ satisfying $\omega \left( s,t\right) \leq
1 $, we have%
\begin{equation}
\left\Vert y_{u}-y_{s}\right\Vert \leq C_{p,\gamma }\omega \left( s,t\right)
^{\frac{1}{p}}\text{.}  \label{inner estimation of ode solution}
\end{equation}%
As a result, combining $\left( \ref{inner estimate of y^(s,u)}\right) $ and $%
\left( \ref{inner estimation of ode solution}\right) $, we have, when $%
\omega \left( s,t\right) \leq 1$,%
\begin{equation*}
\left\Vert \left( y_{u}-y_{s}\right) -\left( y_{1}^{s,u}-y_{s}\right)
\right\Vert ^{\left\{ \gamma \right\} }\omega \left( s,t\right) ^{\frac{N}{p}%
}\leq C_{p,\gamma }\omega \left( s,t\right) ^{\frac{\gamma +1}{p}}\text{.}
\end{equation*}%
Then, continuing with $\left( \ref{inner divide an interval into two}\right) 
$, we get, for any interval $\left[ s,t\right] $ satisfying $\omega \left(
s,t\right) \leq 1$ and any $u\in \left( s,t\right) $,%
\begin{equation}
\left\Vert \Gamma _{s,t}\right\Vert \leq \left( 1+C_{\gamma }\omega \left(
s,t\right) ^{\frac{1}{p}}\right) \left( \left\Vert \Gamma _{s,u}\right\Vert
+\left\Vert \Gamma _{u,t}\right\Vert \right) +C_{p,\gamma }\omega \left(
s,t\right) ^{\frac{\gamma +1}{p}}\text{.}  \label{inner estimation of Gamma}
\end{equation}

With $C_{\gamma }$ and $C_{p,\gamma }$ in $\left( \ref{inner estimation of
Gamma}\right) $, suppose $\left[ s,t\right] $ is an interval satisfying $%
\omega \left( s,t\right) \leq 1$, denote 
\begin{equation*}
\delta :=\left( C_{\gamma }^{p}\vee C_{p,\gamma }^{\frac{p}{\gamma +1}%
}\right) \omega \left( s,t\right) .
\end{equation*}%
Then since $\omega $ is super-additive (i.e. $\left( \ref{inner super
additivity}\right) $), by setting $[t_{0}^{0},t_{1}^{0})=[s,t)$ and
recursively dividing $[t_{j}^{n},t_{j+1}^{n})=[t_{2j}^{n+1},t_{2j+1}^{n+1})%
\sqcup \lbrack t_{2j+1}^{n+1},t_{2j+2}^{n+1})$ in such a way that 
\begin{equation*}
\omega \left( t_{2j}^{n+1},t_{2j+1}^{n+1}\right) =\omega \left(
t_{2j+1}^{n+1},t_{2j+2}^{n+1}\right) \leq \frac{1}{2}\omega \left(
t_{j}^{n},t_{j+1}^{n}\right) \text{, }j=0,1,\dots ,2^{n}-1\text{, }n\geq 0%
\text{,}
\end{equation*}%
we have, based on $\left( \ref{inner estimation of Gamma}\right) $, 
\begin{eqnarray}
\left\Vert \Gamma _{s,t}\right\Vert &\leq &\overline{\lim }_{n\rightarrow
\infty }\left( \sum_{k=0}^{n}\left( \dprod\limits_{j=0}^{k}\left( 1+2^{-%
\frac{j}{p}}\delta ^{\frac{1}{p}}\right) \right) \left( \frac{1}{2}\right)
^{\left( \frac{\gamma +1}{p}-1\right) k}\right) \delta ^{\frac{\gamma +1}{p}}
\label{inner estimate of Gamma} \\
&&+\overline{\lim }_{n\rightarrow \infty }\left(
\dprod\limits_{j=0}^{n}\left( 1+2^{-\frac{j}{p}}\delta ^{\frac{1}{p}}\right)
\right) \left( \sum_{j=0}^{2^{n}-1}\left\Vert \Gamma
_{t_{j}^{n},t_{j+1}^{n}}\right\Vert \right) \text{.}  \notag \\
&\leq &\exp \left( \frac{2^{\frac{1}{p}}}{2^{\frac{1}{p}}-1}\delta ^{\frac{1%
}{p}}\right) \left( \frac{2^{\frac{\gamma +1}{p}-1}}{2^{\frac{\gamma +1}{p}%
-1}-1}\delta ^{\frac{\gamma +1}{p}}+\overline{\lim }_{n\rightarrow \infty
}\sum_{j=0}^{2^{n}-1}\left\Vert \Gamma _{t_{j}^{n},t_{j+1}^{n}}\right\Vert
\right) \text{.}  \notag
\end{eqnarray}

Then we prove that $\overline{\lim }_{n\rightarrow \infty
}\sum_{j=0}^{2^{n}-1}\left\Vert \Gamma _{t_{j}^{n},t_{j+1}^{n}}\right\Vert
=0 $. Since $x:\left[ 0,T\right] \rightarrow \mathcal{V}$ is a continuous
bounded variation path and $y:\left[ 0,T\right] \rightarrow \mathcal{U}$ is
the solution of the ordinary differential equation%
\begin{equation*}
dy=f\left( y\right) dx\text{, \ }y_{0}=\xi \text{,}
\end{equation*}%
we have that ($\left\vert f\right\vert _{Lip\left( \gamma \right) }=1$, $%
\gamma >1$)%
\begin{equation*}
\left\Vert y\right\Vert _{1-var,\left[ s,t\right] }\leq \left\Vert
x\right\Vert _{1-var,\left[ s,t\right] }\text{, }\forall 0\leq s\leq t\leq T%
\text{.}
\end{equation*}%
Thus,%
\begin{eqnarray}
&&\left\Vert y_{t}-y_{s}-\sum_{k=1}^{N}f^{\circ k}\pi _{k}\left( S_{\left[ p%
\right] }\left( x\right) _{s,t}\right) \left( I_{d}\right) \left(
y_{s}\right) \right\Vert  \label{inner estimation of ODE1} \\
&=&\left\Vert \idotsint\nolimits_{s<u_{1}<\cdots <u_{N}<t}f^{\circ N}\left(
dx_{u_{1}}\otimes \cdots \otimes dx_{u_{N}}\right) \left( \left(
I_{d}\right) \left( y_{u_{1}}\right) -\left( I_{d}\right) \left(
y_{s}\right) \right) \right\Vert  \notag \\
&\leq &C_{\gamma }\idotsint\nolimits_{s<u_{1}<\cdots <u_{N}<t}\left\Vert
y_{u_{1}}-y_{s}\right\Vert ^{\left\{ \gamma \right\} }\left\Vert
dx_{u_{1}}\right\Vert \cdots \left\Vert dx_{u_{N}}\right\Vert  \notag \\
&\leq &C_{\gamma }\sup_{u\in \left[ s,t\right] }\left\Vert
y_{u}-y_{s}\right\Vert ^{\left\{ \gamma \right\} }\left\Vert x\right\Vert
_{1-var,\left[ s,t\right] }^{N}\leq C_{\gamma }\left\Vert x\right\Vert
_{1-var,\left[ s,t\right] }^{\gamma +1}\text{.}  \notag
\end{eqnarray}%
On the other hand, based on Lemma \ref{Lemma Euler expansion of the solution
of ODE}, we have%
\begin{equation}
\left\Vert y_{1}^{s,t}-y_{s}-\sum_{k=1}^{N}f^{\circ k}\pi _{k}\left( S_{ 
\left[ p\right] }\left( x\right) _{s,t}\right) \left( I_{d}\right) \left(
y_{s}\right) \right\Vert \leq C_{\gamma }\left\Vert S_{\left[ p\right]
}\left( x\right) _{s,t}\right\Vert ^{\gamma +1}\leq C_{p,\gamma }\left\Vert
x\right\Vert _{1-var,\left[ s,t\right] }^{\gamma +1}\text{.}
\label{inner estimation of ODE2}
\end{equation}%
Thus, combining $\left( \ref{inner estimation of ODE1}\right) $ and $\left( %
\ref{inner estimation of ODE2}\right) $, we get%
\begin{equation*}
\left\Vert \Gamma _{t_{j}^{n},t_{j+1}^{n}}\right\Vert \leq C_{p,\gamma
}\left\Vert x\right\Vert _{1-var,\left[ t_{j}^{n},t_{j+1}^{n}\right]
}^{\gamma +1}\text{, }j=0,1,\dots ,2^{n}-1\text{, }n\geq 0\text{,}
\end{equation*}%
and~we have ($\gamma \geq 1$)%
\begin{equation*}
\overline{\lim }_{n\rightarrow \infty }\sum_{j=0}^{2^{n}-1}\left\Vert \Gamma
_{t_{j}^{n},t_{j+1}^{n}}\right\Vert =0\text{.}
\end{equation*}

Thus, continuing with $\left( \ref{inner estimate of Gamma}\right) $, we get
that, there exists constant $C_{p,\gamma }$, which only depends on $p$ and $%
\gamma $, and is finite whenever $\gamma >p-1$, such that, for any interval $%
\left[ s,t\right] $ satisfying $\omega \left( s,t\right) \leq 1$, we have%
\begin{equation}
\left\Vert y_{t}-y_{1}^{s,t}\right\Vert =\left\Vert \Gamma _{s,t}\right\Vert
\leq C_{p,\gamma }\omega \left( s,t\right) ^{\frac{\gamma +1}{p}}\text{.}
\label{inner result}
\end{equation}

For $\left[ s,t\right] $ satisfying $\omega \left( s,t\right) >1$, as in
Prop 5.10 \cite{Friz and Victoir book}, we decompose $[s,t)=\sqcup
_{j=0}^{n-1}[t_{j},t_{j+1})$ in such a way that $\omega \left(
t_{j},t_{j+1}\right) =1$, $j=0,1,\dots ,n-2$, and $\omega \left(
t_{n-1},t_{n}\right) \leq 1$. Then by using super-additivity of $\omega $,
we have $n-1\leq \omega \left( s,t\right) $, and%
\begin{eqnarray*}
\left\Vert y_{t}-y_{s}\right\Vert  &\leq &\sum_{j=0}^{n-1}\left\Vert
y_{t_{j+1}}-y_{t_{j}}\right\Vert \leq C_{p,\gamma }\left( n-1+\omega \left(
t_{n-1},t_{n}\right) ^{\frac{1}{p}}\right)  \\
&\leq &C_{p,\gamma }n\leq C_{p,\gamma }\left( \omega \left( s,t\right)
+1\right) \leq 2C_{p,\gamma }\omega \left( s,t\right) \text{.}
\end{eqnarray*}%
On the other hand, when $\omega \left( s,t\right) \geq 1$, 
\begin{eqnarray*}
\left\Vert y_{1}^{s,t}-y_{s}\right\Vert  &\leq &\left\Vert
y_{1}^{s,t}-y_{0}^{s,t}\right\Vert +\left\Vert y_{0}^{s,t}-y_{s}\right\Vert 
\\
&\leq &C_{\gamma }\left\Vert S_{\left[ p\right] }\left( x\right)
_{s,t}\right\Vert ^{N-1}+C_{\gamma }\left\Vert S_{\left[ p\right] }\left(
x\right) _{s,t}\right\Vert ^{N}\leq C_{\gamma }\omega \left( s,t\right) ^{%
\frac{N}{p}}\text{.}
\end{eqnarray*}%
Therefore, when $\omega \left( s,t\right) \geq 1$,%
\begin{equation*}
\left\Vert y_{t}-y_{1}^{s,t}\right\Vert =\left\Vert y_{t}-y_{s}-\left(
y_{1}^{s,t}-y_{s}\right) \right\Vert \leq C_{p,\gamma }\omega \left(
s,t\right) +C_{\gamma }\omega \left( s,t\right) ^{\frac{N}{p}}\leq
C_{p,\gamma }\omega \left( s,t\right) ^{\frac{\gamma +1}{p}}\text{.}
\end{equation*}
\end{proof}

\begin{lemma}
\label{Lemma convergence of solution of ode}Suppose $f\in L\left( \mathcal{V}%
,C^{\gamma }\left( \mathcal{U},\mathcal{U}\right) \right) $ for some $\gamma
>1$. For $g\in G^{\lfloor \gamma \rfloor +1}\left( \mathcal{V}\right) $ and $%
\xi \in \mathcal{U}$, define $y\left( g,\xi \right) :\left[ 0,1\right]
\rightarrow \mathcal{U}$ as the unique solution of the ordinary differential
equation: 
\begin{eqnarray*}
dy_{u} &=&\sum_{k=1}^{\lfloor \gamma \rfloor }f^{\circ k}\pi _{k}\left( \log
_{\lfloor \gamma \rfloor +1}\left( g\right) \right) \left( I_{d}\right)
\left( y_{u}\right) du\text{, \ }u\in \left[ 0,1\right] \text{, } \\
y_{0} &=&\xi +f^{\circ \left( \lfloor \gamma \rfloor +1\right) }\pi
_{\lfloor \gamma \rfloor +1}\left( \log _{\lfloor \gamma \rfloor +1}\left(
g\right) \right) \left( I_{d}\right) \left( \xi \right) \in \mathcal{U}\text{%
,}
\end{eqnarray*}%
If there exist $\left\{ g^{l}\right\} _{l\geq 1}\subset G^{\lfloor \gamma
\rfloor +1}\left( \mathcal{V}\right) $ and $\left\{ \xi ^{l}\right\} _{\geq
1}\subset \mathcal{U}$ such that 
\begin{equation*}
\lim_{l\rightarrow \infty }\max_{1\leq k\leq \lfloor \gamma \rfloor
+1}\left\Vert \pi _{k}\left( g^{l}\right) -\pi _{k}\left( g\right)
\right\Vert =0\text{ and }\lim_{l\rightarrow \infty }\left\Vert \xi ^{l}-\xi
\right\Vert =0\text{,}
\end{equation*}%
then%
\begin{equation*}
\lim_{l\rightarrow \infty }\sup_{t\in \left[ 0,1\right] }\left\Vert y\left(
g^{l},\xi ^{l}\right) _{t}-y\left( g,\xi \right) _{t}\right\Vert =0\text{.}
\end{equation*}
\end{lemma}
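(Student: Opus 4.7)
The plan is to establish continuous dependence of the ODE on parameters and initial data by a standard Gronwall argument, with the only nontrivial input being the continuity of the coefficients $g\mapsto \pi_k(\log_N g)$ and the resulting uniform approximation of the driving vector field.

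Let $N:=\lfloor\gamma\rfloor+1$. I would first rewrite the ODE in a compact form by setting, for $g\in G^{N}(\mathcal{V})$ and $u\in \mathcal{U}$,
\[
V_{g}(u):=\sum_{k=1}^{\lfloor\gamma\rfloor}f^{\circ k}\pi_{k}(\log_{N}g)(I_{d})(u),\qquad \eta(g,\xi):=\xi+f^{\circ N}\pi_{N}(\log_{N}g)(I_{d})(\xi),
\]
so that $y(g,\xi)$ is the unique solution of $dy_{u}/du=V_{g}(y_{u})$, $y_{0}=\eta(g,\xi)$. The first step is to observe that $g\mapsto\log_{N}g$ is a polynomial in $(g-1)$, hence the componentwise convergence $\pi_{k}(g^{l})\to\pi_{k}(g)$ forces $\pi_{k}(\log_{N}g^{l})\to\pi_{k}(\log_{N}g)$ in $\mathcal{V}^{\otimes k}$ for every $1\leq k\leq N$. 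Combining this with the linearity and boundedness of $f^{\circ k}:\mathcal{V}^{\otimes k}\to(\mathcal{D}^{k}(\mathcal{U}),|\cdot|_{k})$, together with the $Lip(\gamma-k+1)$ estimate of $f^{\circ k}(v)(I_{d})$ established inside the proof of Lemma \ref{Lemma Euler expansion of the solution of ODE} (inequality $\left(\ref{inner estimate of fcirck Id}\right)$), one concludes that $f^{\circ k}(\pi_{k}(\log_{N}g^{l}))(I_{d})\to f^{\circ k}(\pi_{k}(\log_{N}g))(I_{d})$ in $C^{\gamma-k+1}(\mathcal{U},\mathcal{U})$. Summing these statements I get (a) $\eta(g^{l},\xi^{l})\to\eta(g,\xi)$ in $\mathcal{U}$, (b) $V_{g^{l}}\to V_{g}$ uniformly on every bounded subset of $\mathcal{U}$, and (c) a uniform-in-$l$ Lipschitz constant for $V_{g^{l}}$ on every bounded subset of $\mathcal{U}$.

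With these three facts in hand, the Gronwall step is routine. From (a) and (c), the solutions $y^{l}:=y(g^{l},\xi^{l})$ exist on $[0,1]$ and, for all large $l$, are contained in a fixed bounded ball $K\subset\mathcal{U}$ also containing $y:=y(g,\xi)$. Writing
\[
y^{l}_{t}-y_{t}=\bigl(\eta(g^{l},\xi^{l})-\eta(g,\xi)\bigr)+\int_{0}^{t}\bigl(V_{g^{l}}(y^{l}_{s})-V_{g}(y^{l}_{s})\bigr)\,ds+\int_{0}^{t}\bigl(V_{g}(y^{l}_{s})-V_{g}(y_{s})\bigr)\,ds,
\]
I estimate the first bracket by $\|\eta(g^{l},\xi^{l})-\eta(g,\xi)\|$, the second by $\sup_{u\in K}\|V_{g^{l}}(u)-V_{g}(u)\|$, and the third by $L\int_{0}^{t}\|y^{l}_{s}-y_{s}\|\,ds$ where $L$ is the Lipschitz constant of $V_{g}$ on $K$. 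Applying the integral form of Gronwall's inequality and using (a) and (b) to send the non-integral part to zero gives $\sup_{t\in[0,1]}\|y^{l}_{t}-y_{t}\|\to 0$.

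The only step that requires a little care is the continuity/boundedness of the coefficient functionals, i.e.\ turning componentwise convergence of $g^{l}\to g$ into $C^{1}$-type convergence of the vector fields $V_{g^{l}}\to V_{g}$ on bounded subsets of $\mathcal{U}$. Once the $Lip(\gamma-k+1)$ bound $\left(\ref{inner estimate of fcirck Id}\right)$ is invoked this is immediate by linearity of $f^{\circ k}$; the Gronwall closure is then purely mechanical and produces no new ideas beyond those already used in the proof of Lemma \ref{Lemma important}.
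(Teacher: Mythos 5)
Your argument is correct and is essentially the same as the paper's: the paper invokes Thm~3.15 of Friz--Victoir (itself a Gronwall-type continuous-dependence result for ODEs) to get the same linear control of $\sup_t\|y(g^l,\xi^l)_t-y(g,\xi)_t\|$ by $\|y(g^l,\xi^l)_0-y(g,\xi)_0\|$ and $\sup_k\|f^{\circ k}(\pi_k\log_N g^l-\pi_k\log_N g)(I_d)\|_\infty$, where you instead carry out the Gronwall step explicitly. The one point the paper makes explicit that you only handle implicitly is that the dependence of $y_0$ on $\xi$ through $f^{\circ N}\pi_N(\log_N g)(I_d)(\xi)$ is only $\{\gamma\}$-H\"{o}lder (not Lipschitz) when $\gamma\notin\mathbb{Z}$, giving the extra term $\|\pi_N(\log_N g)\|\,\|\xi^l-\xi\|^{\{\gamma\}}$ in their estimate; your step (a) ``$\eta(g^l,\xi^l)\to\eta(g,\xi)$'' is still correct since H\"{o}lder continuity suffices, but it is worth noting.
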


\begin{proof}
Since $\sum_{k=1}^{\lfloor \gamma \rfloor }f^{\circ k}\left( I_{d}\right)
\in C^{1}\left( \mathcal{U},\mathcal{U}\right) $, based on Thm 3.15 \cite%
{Friz and Victoir book} (their result extends naturally to ordinary
differential equations in Banach spaces), we get 
\begin{eqnarray*}
&&\sup_{t\in \left[ 0,1\right] }\left\Vert y\left( g^{l},\xi ^{l}\right)
_{t}-y\left( g,\xi \right) _{t}\right\Vert \\
&\leq &C_{f}\left( \left\Vert y\left( g^{l},\xi ^{l}\right) _{0}-y\left(
g,\xi \right) _{0}\right\Vert +\sum_{k=1}^{\lfloor \gamma \rfloor
}\left\Vert f^{\circ k}\left( \pi _{k}\left( \log _{\lfloor \gamma \rfloor
+1}\left( g^{l}\right) \right) -\pi _{k}\left( \log _{\lfloor \gamma \rfloor
+1}\left( g\right) \right) \right) \left( I_{d}\right) \right\Vert _{\infty
}\right) \\
&\leq &C_{f}\left( \left\Vert \xi ^{l}-\xi \right\Vert +\left\Vert \pi
_{\lfloor \gamma \rfloor +1}\left( \log _{\lfloor \gamma \rfloor +1}\left(
g\right) \right) \right\Vert \left\Vert \xi ^{l}-\xi \right\Vert ^{\left\{
\gamma \right\} }+\sum_{k=1}^{\lfloor \gamma \rfloor +1}\left\Vert \pi
_{k}\left( \log _{\lfloor \gamma \rfloor +1}\left( g^{l}\right) \right) -\pi
_{k}\left( \log _{\lfloor \gamma \rfloor +1}\left( g\right) \right)
\right\Vert \right) \text{.}
\end{eqnarray*}
\end{proof}

\begin{proof}[Proof of Theorem \protect\ref{Theorem rough Euler expansion of
solution of RDE}]
\label{Proof of Theorem rough Euler expansion of solution of RDE}Based on
the definition of geometric $p$-rough path (in Definition \ref{Definition of
geometric rough path} on page \pageref{Definition of geometric rough path}),
there exists a sequence of continuous bounded variation paths $\left\{
x^{l}\right\} :\left[ 0,T\right] \rightarrow \mathcal{V}$, such that%
\begin{equation*}
\lim_{l\rightarrow \infty }d_{p}\left( S_{\left[ p\right] }\left(
x^{l}\right) ,X\right) =0\text{.}
\end{equation*}%
As a result, we have%
\begin{equation}
\ \lim_{l\rightarrow \infty }\left\Vert S_{\left[ p\right] }\left(
x^{l}\right) \right\Vert _{p-var,\left[ s,t\right] }=\left\Vert X\right\Vert
_{p-var,\left[ s,t\right] }\text{, \ }\forall 0\leq s\leq t\leq T\text{,}
\label{inner condition 11}
\end{equation}%
and (based on Thm 3.1.3 \cite{LyonsQian}) 
\begin{equation}
\lim_{l\rightarrow \infty }\max_{n=1,2,\dots ,\lfloor \gamma \rfloor
+1}\left\Vert \pi _{n}\left( S_{\lfloor \gamma \rfloor +1}\left(
x^{l}\right) _{s,t}\right) -\pi _{n}\left( S_{\lfloor \gamma \rfloor
+1}\left( X\right) _{s,t}\right) \right\Vert =0\text{, }\forall 0\leq s\leq
t\leq T\text{.}  \label{inner condition 1}
\end{equation}

On the other hand, denote $y^{l}:\left[ 0,T\right] \rightarrow \mathcal{U}$
as the unique solution of the ordinary differential equation 
\begin{equation*}
dy^{l}=f\left( y^{l}\right) dx^{l}\text{, }y_{0}^{l}=\xi \text{,}
\end{equation*}%
and denote $Y:=\pi _{G^{\left[ p\right] }\left( \mathcal{U}\right) }\left(
Z\right) $ with $Z$ denotes the unique solution (in the sense of Definition %
\ref{Definition of solution of RDE}) of the rough differential equation%
\begin{equation*}
dY=f\left( Y\right) dX\text{, }Y_{0}=\xi \text{.}
\end{equation*}%
Then based on universal limit theorem (Thm 5.3 \cite{Lyonsnotes}), we have 
\begin{equation}
\lim_{l\rightarrow \infty }\left\Vert y_{t}^{l}-\pi _{1}\left( Y_{t}\right)
\right\Vert =0\text{, \ }\forall t\in \left[ 0,T\right] \text{.}
\label{inner condition 2}
\end{equation}

For $0\leq s\leq t\leq T$ and $l\geq 1$, denote $y^{s,t,l}:\left[ 0,1\right]
\rightarrow \mathcal{U}$ as the unique solution of the ordinary differential
equation: 
\begin{eqnarray*}
dy_{u}^{s,t,l} &=&\sum_{k=1}^{\lfloor \gamma \rfloor }f^{\circ k}\pi
_{k}\left( \log _{\lfloor \gamma \rfloor +1}\left( S_{\lfloor \gamma \rfloor
+1}\left( x^{l}\right) \right) \right) \left( I_{d}\right) \left(
y_{u}^{s,t,l}\right) du\text{, \ }u\in \left[ 0,1\right] \text{, } \\
y_{0}^{s,t,l} &=&y_{s}^{l}+f^{\circ \left( \lfloor \gamma \rfloor +1\right)
}\pi _{\lfloor \gamma \rfloor +1}\left( \log _{\lfloor \gamma \rfloor
+1}\left( S_{\lfloor \gamma \rfloor +1}\left( x^{l}\right) \right) \right)
\left( I_{d}\right) \left( y_{s}^{l}\right) \in \mathcal{U}\text{.}
\end{eqnarray*}%
For $0\leq s\leq t\leq T$, denote $y^{s,t}:\left[ 0,1\right] \rightarrow 
\mathcal{U}$ as the unique solution of the ordinary differential equation:%
\begin{eqnarray*}
dy_{u}^{s,t} &=&\sum_{k=1}^{\lfloor \gamma \rfloor }f^{\circ k}\pi
_{k}\left( \log _{\lfloor \gamma \rfloor +1}\left( S_{\lfloor \gamma \rfloor
+1}\left( X\right) \right) \right) \left( I_{d}\right) \left(
y_{u}^{s,t}\right) du\text{, \ }u\in \left[ 0,1\right] \text{, } \\
y_{0}^{s,t} &=&\pi _{1}\left( Y_{s}\right) +f^{\circ \left( \lfloor \gamma
\rfloor +1\right) }\pi _{\lfloor \gamma \rfloor +1}\left( \log _{\lfloor
\gamma \rfloor +1}\left( S_{\lfloor \gamma \rfloor +1}\left( X\right)
\right) \right) \left( I_{d}\right) \left( \pi _{1}\left( Y_{s}\right)
\right) \in \mathcal{U}\text{.}
\end{eqnarray*}%
Then according to Lemma \ref{Lemma convergence of solution of ode}, we have%
\begin{equation}
\lim_{l\rightarrow \infty }\left\Vert y_{1}^{s,t,l}-y_{1}^{s,t}\right\Vert =0%
\text{.}  \label{inner condition 3}
\end{equation}

Based on Lemma \ref{Lemma important}, for each $l\geq 1$, we have%
\begin{gather}
\left\Vert y_{t}^{l}-y_{1}^{s,t,l}\right\Vert \leq C_{p,\gamma }\left(
\left\vert f\right\vert _{Lip\left( \gamma \right) }\left\Vert S_{\left[ p%
\right] }\left( x^{l}\right) \right\Vert _{p-var,\left[ s,t\right] }\right)
^{\gamma +1}\text{,}  \label{inner repeat result of ode} \\
\left\Vert y_{t}^{l}-y_{s}^{l}-\sum_{k=1}^{\lfloor \gamma \rfloor
+1}f^{\circ k}\pi _{k}\left( S_{\lfloor \gamma \rfloor +1}\left(
x^{l}\right) _{s,t}\right) \left( I_{d}\right) \left( y_{s}^{l}\right)
\right\Vert \leq C_{p,\gamma }\left( \left\vert f\right\vert _{Lip\left(
\gamma \right) }\left\Vert S_{\left[ p\right] }\left( x^{l}\right)
\right\Vert _{p-var,\left[ s,t\right] }\right) ^{\gamma +1}\text{.}  \notag
\end{gather}%
Combining $\left( \ref{inner condition 11}\right) $, $\left( \ref{inner
condition 1}\right) $, $\left( \ref{inner condition 2}\right) $ and $\left( %
\ref{inner condition 3}\right) $, we let $l\rightarrow \infty $ in $\left( %
\ref{inner repeat result of ode}\right) $, and get Theorem \ref{Theorem
rough Euler expansion of solution of RDE}.
\end{proof}

\section{Acknowledgement}

The research of Youness Boutaib and Lajos Gergely Gyurk\'{o} are supported
by Oxford-Man Institute. The research of Terry Lyons and Danyu Yang are
supported by European Research Council under the European Union's Seventh
Framework Programme (FP7-IDEAS-ERC)/ ERC grant agreement nr. 291244. The
research of Terry Lyons is supported by EPSRC grant EP/H000100/1. The
authors are grateful for the support of Oxford-Man Institute.

\end{document}